\numberwithin{equation}{section}
\theoremstyle{definition}
\numberwithin{equation}{section}
\newcommand{\ncom}{\newcommand}
\ncom{\beq}{\begin{equation}}
\ncom{\eeq}{\end{equation}}
\ncom{\bea}{\begin{eqnarray*}}
\ncom{\eea}{\end{eqnarray*}}
\ncom{\beqa}{\begin{eqnarray}}
\ncom{\eeqa}{\end{eqnarray}}
\ncom{\nno}{\nonumber}
\ncom{\non}{\nonumber}
\ncom{\ds}{\displaystyle}
\ncom{\half}{\frac{1}{2}}
\ncom{\mbx}{\makebox{.25cm}}
\ncom{\hs}{\mbox{\hspace{.25cm}}}
\ncom{\rar}{\rightarrow}
\ncom{\Rar}{\Rightarrow}
\ncom{\noin}{\noindent}
\ncom{\bc}{\begin{center}}
\ncom{\ec}{\end{center}}
\ncom{\sz}{\scriptsize}
\ncom{\rf}{\ref}
\ncom{\s}{\sqrt{2}}
\ncom{\sgm}{\sigma}
\ncom{\Sgm}{\Sigma}
\ncom{\psgm}{\sigma^{\prime}}
\ncom{\dt}{\delta}
\ncom{\Dt}{\Delta}
\ncom{\lmd}{\lambda}
\ncom{\Lmd}{\Lambda}
\ncom{\Th}{\Theta}
\ncom{\e}{\eta}
\ncom{\eps}{\epsilon}
\ncom{\pcc}{\stackrel{P}{>}}
\ncom{\lp}{\stackrel{L_{p}}{>}}
\ncom{\dist}{{\rm\,dist}}
\ncom{\sspan}{{\rm\,span}}
\ncom{\re}{{\rm Re\,}}
\ncom{\im}{{\rm Im\,}}
\ncom{\sgn}{{\rm sgn\,}}
\ncom{\ba}{\begin{array}}
\ncom{\ea}{\end{array}}
\ncom{\hone}{\mbox{\hspace{1em}}}
\ncom{\htwo}{\mbox{\hspace{2em}}}
\ncom{\hthree}{\mbox{\hspace{3em}}}
\ncom{\hfour}{\mbox{\hspace{4em}}}
\ncom{\vone}{\vskip 2ex}
\ncom{\vtwo}{\vskip 4ex}
\ncom{\vonee}{\vskip 1.5ex}
\ncom{\vthree}{\vskip 6ex}
\ncom{\vfour}{\vspace*{8ex}}
\ncom{\norm}{\|\;\;\|}
\ncom{\integ}[4]{\int_{#1}^{#2}\,{#3}\,d{#4}}
\ncom{\vspan}[1]{{{\rm\,span}\{ #1 \}}}
\ncom{\dm}[1]{ {\displaystyle{#1} } }
\ncom{\ri}[1]{{#1} \index{#1}}
\newtheorem{theorem}{\bf Theorem}[section]
\newtheorem{remark}{\bf Remark}[section]
\newtheorem{proposition}{Proposition}[section]
\newtheorem{lemma}{Lemma}[section]
\newtheorem{corollary}{Corollary}[section]
\newtheorem{definition}{Definition}[section]
\newtheoremstyle
    {remarkstyle}
    {}
    {11pt}
    {}
    {}
    {\bfseries}
    {:}
    {     }
    {\thmname{#1} \thmnumber{#2} }
\theoremstyle{remarkstyle}
\def\eps{\varepsilon}
\begin{document}
\title{Convoluted Fractional Poisson Process}
\author[Kuldeep Kumar Kataria]{K. K. Kataria}
\address{Kuldeep Kumar Kataria, Department of Mathematics,
	Indian Institute of Technology Bhilai, Raipur-492015, India.}
\email{kuldeepk@iitbhilai.ac.in}
\author[Mostafizar Khandakar]{M. Khandakar}
\address{Mostafizar Khandakar, Department of Mathematics,
	Indian Institute of Technology Bhilai, Raipur-492015, India.}
\email{mostafizark@iitbhilai.ac.in}
\date{May 21, 2020.}
\subjclass[2010]{Primary : 60G22; 60G55; Secondary: 60G51; 60J75.}
\keywords{time fractional Poisson process; discrete convolution; subordination; long-range dependence property; short-range dependence property.}
\begin{abstract}
In this paper, we introduce and study a convoluted version of the time fractional Poisson process by taking the discrete convolution with respect to space variable in the system of fractional differential equations that governs its state probabilities. We call the introduced process as the convoluted fractional Poisson process (CFPP). The explicit expression for the Laplace transform of its state probabilities are obtained whose inversion yields its one-dimensional distribution. Some of its statistical properties such as probability generating function, moment generating function, moments {\it etc.} are obtained. A special case of CFPP, namely, the convoluted Poisson process (CPP) is studied and its time-changed subordination relationships with CFPP are discussed. It is shown that the CPP is a L\'evy process using which the long-range dependence property of CFPP is established. Moreover, we show that the increments of CFPP exhibits short-range dependence property.
\end{abstract}

\maketitle
\section{Introduction}
The Poisson process is a renewal process with exponentially distributed waiting times. This L\'evy process is often used to model the counting phenomenon. Empirically, it is observed that the process with heavy-tailed distributed waiting times offers a better model than the ones with light-tailed distributed waiting times. For this purpose several fractional generalizations of the homogeneous Poisson process are introduced and studied by researchers in the past two decades. These generalizations give rise to some interesting connections between the theory of stochastic subordination, fractional calculus and renewal processes. These fractional processes can be broadly categorized into two types: the time fractional types and the space fractional types. 

The time fractional versions of the Poisson process are obtained by replacing the time derivative in the governing difference-differential equations of the state probabilities of Poisson process by certain fractional derivatives. These include Riemann-Liouville fractional derivative (see Laskin (2003)), Caputo fractional derivative (see Beghin and Orsingher (2009)), Prabhakar derivative (see Polito and Scalas (2016)), Saigo fractional derivative (see Kataria and Vellaisamy (2017a)) {\it etc}. These time fractional models is further generalized to state-dependent fractional Poisson processes (see Garra {\it et al.} (2015)) and the mixed fractional Poisson process (see Beghin (2012) and Aletti {\it et al.} (2018)). The governing difference-differential equations of state-dependent fractional Poisson processes depend on the number of events that have occurred till any given time $t$. The properties related to the notion of long memory such as the long-range dependence (LRD) property and the short-range dependence (SRD) property are obtained for such fractional processes by Biard and Saussereau (2014), Maheshwari and Vellaisamy (2016), Kataria and Khandakar (2020). 

Orsingher and Polito (2012) introduced a space fractional version of the Poisson process, namely, the space fractional Poisson process (SFPP). It is characterized as a stochastic process obtained by time-changing the Poisson process by an independent stable subordinator. Orsingher and Toaldo (2015) studied a class of generalized space fractional Poisson processes associated with Bern\v stein functions. A particular choice of Bern\v stein function leads to a specific point process. Besides the SFPP, this class includes the relativistic Poisson process and the gamma-Poisson process as particular cases. Beghin and Vellaisamy (2018) introduced and study a process by time-changing the SFPP by a gamma subordinator. The jumps can take any positive value is a specific characteristics of these generalized space fractional processes.

The time fractional Poisson process (TFPP), denoted by $\{N^\alpha(t)\}_{t\geq0}$, $0<\alpha\leq 1$, is a time fractional version of the homogeneous Poisson process whose state probabilities $p^\alpha(n,t)=\mathrm{Pr}\{N^\alpha(t)=n\}$ satisfy (see Laskin (2003), Beghin and Orsingher (2009))
\begin{equation}\label{qqawq112}
\partial_t^\alpha p^\alpha(n,t)=-\lambda p^\alpha(n,t)+\lambda p^\alpha(n-1,t),\ \ n\geq 0,
\end{equation}
with $p^\alpha(-1,t)=0$, $t\geq 0$ and the initial conditions $p^\alpha(0,0)=1$ and $p^\alpha(n,0)=0$, $n\geq 1$. Here, $\lambda>0$ is the intensity parameter.

The derivative $\partial_t^\alpha$ involved in (\ref{qqawq112}) is the Dzhrbashyan--Caputo fractional derivative defined as
\begin{equation}\label{plm1}
\partial^{\alpha}_{t}f(t)\coloneqq\begin{cases}
\dfrac{1}{\Gamma \left( 1-\alpha \right)}\displaystyle\int_{0}^{t}(t-s)^{-\alpha}f^{\prime}(s)\,\mathrm{d}s,\ \ 0<\alpha<1,\vspace*{.2cm}\\
f^{\prime}(t), \ \ \alpha=1,
\end{cases}
\end{equation}
whose Laplace transform is given by (see Kilbas {\it et al.} (2006), Eq. (5.3.3))
\begin{equation}\label{lc}
\mathcal{L}\left(\partial^{\alpha}_{t}f(t);s\right)=s^{\alpha}\tilde{f}(s)-s^{\alpha-1}f(0),\ \ s>0.
\end{equation}
For $\alpha=1$, the TFPP reduces to Poisson process.

The TFPP is characterized as a time-changed Poisson process $\{N(t)\}_{t\geq0}$ by an inverse $\alpha$-stable subordinator $\{H^{\alpha}(t)\}_{t\geq0}$ (see Meerschaert {\it et al.} (2011)), that is,
\begin{equation}\label{1.1df}
N^\alpha(t)\overset{d}{=}N(H^\alpha(t)),
\end{equation}
where $\overset{d}{=}$ means equal in distribution.

In this paper, we introduce a counting process by varying intensity as a function of states and by taking discrete convolution in (\ref{qqawq112}). The discrete convolution used is defined in (\ref{def}). We call the introduced process as the convoluted fractional Poisson process (CFPP)  and denote it by $\{\mathcal{N}^{\alpha}_{c}(t)\}_{t\ge0}$, $0<\alpha\le1$. It is defined as the stochastic process whose state probabilities $p^\alpha_{c}(n,t)=\mathrm{Pr}\{\mathcal{N}^{\alpha}_{c}(t)=n\}$, satisfy
\begin{equation}\label{modellq}
\partial^{\alpha}_{t}p^\alpha_{c}(n,t)=-\lambda_{n}*p^\alpha_{c}(n,t)+\lambda_{n-1}*p^\alpha_{c}(n-1,t),\  \ \ n\ge0,
\end{equation}
with initial conditions
\begin{equation*}
p^\alpha_{c}(n,0)=\begin{cases}
1,\ \ n=0,\\
0,\ \ n\ge1,
\end{cases}
\end{equation*}
and $p^\alpha_{c}(-n,t)=0$ for all $n\ge1$, $t\ge0$. Also, $\{\lambda_{j},\ j\in\mathbb{Z}\}$ is a non-increasing sequence of intensity parameters such that $\lambda_{j}=0$ for all $j<0$, $\lambda_{0}>0$ and $\lambda_{j}\ge0$ for all $j>0$ with $\lim\limits_{j\to\infty}\lambda_{j+1}/\lambda_{j}<1$.

The Laplace transform of the state probabilities of CFPP is inverted to obtain its one-dimensional distribution in terms of Mittag-Leffler function, defined in (\ref{mit}), as
\begin{equation*}
p^\alpha_{c}(n,t)=\begin{cases}
E_{\alpha,1}(-\lambda_{0}t^{\alpha}),\ \ n=0,\vspace*{.2cm}\\
\displaystyle\sum_{k=1}^{n}\sum_{\Theta_{n}^{k}}k!\prod_{j=1}^{n}\frac{(\lambda_{j-1}-\lambda_{j})^{k_{j}}}{k_{j}!} t^{k\alpha}E_{\alpha,k\alpha+1}^{k+1}(-\lambda_{0}t^{\alpha}),\ \  n\ge1,
\end{cases}
\end{equation*}
where the sum is taken over the set $\Theta_{n}^{k}=\{(k_{1},k_{2},\dots,k_{n}):\sum_{j=1}^{n}k_{j}=k, \ \  \sum_{j=1}^{n}jk_{j}=n,\ k_{j}\in\mathbb{N}_0\}$. An alternate expression for $p^\alpha_{c}(n,t)$ is obtained where the sum is taken over a slightly simplified set. It is observed that the CFPP is not a renewal process. The TFPP can be obtained as particular case of the CFPP by taking $\lambda_{n}=0$ for all $n\ge1$. Further, $\alpha=1$ gives the homogeneous Poisson process.

The paper is organized as follows: In Section \ref{Section2}, we set some notations and give some preliminary results related to Mittag-Leffler function, its generalizations, Bell polynomials {\it etc}. In Section \ref{Section3}, we introduce the CFPP and obtain its state probabilities. It is shown that the CFPP is a limiting case of a fractional counting process introduced and studied by Di Crescenzo {\it et al.} (2016). Its probability generating function (pgf), factorial moments, moment generating function (mgf), moments including mean and variance are derived. Also, it is shown that the CFPP is a fractional compound Poisson process. In Section \ref{Section4}, we study a particular case of the CFPP, namely, the convoluted Poisson process (CPP). It is shown that the CPP is a L\'evy process. Some subordination results related to CPP, CFPP and inverse stable subordinator are obtained. In Section \ref{Section5}, we have shown that the CFPP exhibits the LRD property whereas its increments have the SRD property.
\section{Preliminaries}\label{Section2}
The set of integers is denoted by $\mathbb{Z}$, the set of positive integers by $\mathbb{N}$ and the set of non-negative integers by $\mathbb{N}_0$. The following definitions and known results related to discrete convolution, Bell polynomials, Mittag-Leffler function and its generalizations will be used.
\subsection{Discrete Convolution}
The discrete convolution of two real-valued functions $f$ and $g$ whose support is the set of integers is defined as (see Damelin and Miller (2012), p. 232)
\begin{equation}\label{def}
	(f*g)(n)\coloneqq\sum_{j=-\infty}^{\infty}f(j)g(n-j).
\end{equation}
Here, $\sum_{j=-\infty}^{\infty}|f(j)|<\infty$ and $\sum_{j=-\infty}^{\infty}|g(j)|<\infty$, that is, $f,g\in\ell^1$ space.
\subsection{Bell polynomials}
The ordinary Bell polynomials $\hat{B}_{n,k}$ in $n-k+1$ variables is defined as
\begin{equation*}
\hat{B}_{n,k}(u_{1},u_{2},\dots,u_{n-k+1})\coloneqq\sum_{\Lambda_{n}^{k}}k!\prod_{j=1}^{n-k+1}\frac{u_{j}^{k_{j}}}{k_{j}!},
\end{equation*}
where
\begin{equation}\label{lambdaee11}
\Lambda_{n}^{k}=\left\{(k_{1},k_{2},\dots,k_{n-k+1}):\sum_{j=1}^{n-k+1}k_{j}=k, \ \  \sum_{j=1}^{n-k+1}jk_{j}=n, \ k_{j}\in\mathbb{N}_0\right\}.
\end{equation}
The following results holds (see Comtet 1974, pp. 133-137):
\begin{equation}\label{fm1}
\exp\left(x\sum_{j=1}^{\infty}u_{j}t^{j}\right)=1+\sum_{n=1}^{\infty}t^{n}\sum_{k=1}^{n}\hat{B}_{n,k}(u_{1},u_{2},\dots,u_{n-k+1})\frac{x^{k}}{k!}
\end{equation}
and 
\begin{equation}\label{fm2}
\left(\sum_{j=1}^{\infty}u_{j}t^{j}\right)^{k}=\sum_{n=k}^{\infty}\hat{B}_{n,k}(u_{1},u_{2},\dots,u_{n-k+1})t^{n}.
\end{equation}
\subsection{Mittag-Leffler function and its generalizations}
The Mellin-Barnes representation of the exponential function is given by (see Paris and Kaminski (2001), Eq. (3.3.2))
\begin{equation}\label{me}
e^{x}=\frac{1}{2\pi i}\int_{c-i\infty}^{c+i\infty}\Gamma (z)(-x)^{-z}\mathrm{d}z, \ \ x\neq0,
\end{equation}
where $i=\sqrt{-1}$.

The one-parameter Mittag-Leffler function is a generalization of the exponential function. It is defined as (see Mathai and Haubold (2008)) 
\begin{equation*}
	E_{\alpha, 1}(x)\coloneqq\sum_{k=0}^{\infty} \frac{x^{k}}{\Gamma(k\alpha+1)},\ \ x\in\mathbb{R},
\end{equation*}
where $\alpha>0$. For $\alpha=1$, it reduces to the exponential function. It is further generalized to two-parameter and three-parameter Mittag-Leffler functions. 

The three-parameter Mittag-Leffler function is defined as
\begin{equation}\label{mit}
	E_{\alpha, \beta}^{\gamma}(x)\coloneqq\frac{1}{\Gamma(\gamma)}\sum_{k=0}^{\infty} \frac{\Gamma(\gamma+k)x^{k}}{k!\Gamma(k\alpha+\beta)},\ \ x\in\mathbb{R},
\end{equation}
where $\alpha>0$, $\beta>0$ and $\gamma>0$. 

For $x\neq0$, its Mellin-Barnes representation is given by (see Mathai and Haubold (2008), Eq. (2.3.5))
\begin{equation}\label{m3}
E_{\alpha,\beta}^{\gamma}(x)=\frac{1}{2\pi i\Gamma(\gamma)}\int_{c-i\infty}^{c+i\infty}\frac{\Gamma (z)\Gamma(\gamma-z)}{\Gamma(\beta-\alpha z)}(-x)^{-z}\mathrm{d}z,
\end{equation}
where $0<c<\gamma$. For $\gamma=1$, it reduces to two-parameter Mittag-Leffler function. Further, $\beta=\gamma=1$ reduce it to one-parameter Mittag-Leffler function. Note that (\ref{m3}) reduces to (\ref{me}) for $\alpha=\beta=\gamma=1$.

Let $\alpha>0$, $\beta>0$, $t>0$ and $x,y$ be any two reals. The Laplace transform of the function $t^{\beta-1}E^{\gamma}_{\alpha,\beta}(\lambda t^{\alpha})$ is given by  (see Kilbas {\it et al.} (2006), Eq. (1.9.13)):
\begin{equation}\label{mi}
	\mathcal{L}\{t^{\beta-1}E^{\gamma}_{\alpha,\beta}(xt^{\alpha});s\}=\frac{s^{\alpha\gamma-\beta}}{(s^{\alpha}-x)^{\gamma}},\ s>|x|^{1/\alpha}.
\end{equation} 

The following result holds for three-parameter Mittag-Leffler function (see Oliveira {\it et al.} (2016), Theorem 3.2):
\begin{equation}\label{formula}
	\sum_{k=0}^{\infty} (yt^{\alpha})^{k}E_{\alpha,k\alpha+\beta}^{k+1}(xt^{\alpha})=E_{\alpha,\beta}((x+y)t^{\alpha}).
\end{equation}

Let $E_{\alpha, \beta}^{(n)}(.)$ denote the $n$th derivative of two-parameter Mittag-Leffler function. Then, (see Kataria and Vellaisamy (2019), Eq. (7)) :
\begin{equation}\label{re}
	E_{\alpha, \beta}^{(n)}(x)=n!E_{\alpha, n\alpha+\beta}^{n+1}(x),\ \ n\ge0.
\end{equation}

\section{Convoluted Fractional Poisson Process}\label{Section3}
Here, we introduce and study a counting process by varying intensity as a function of states and by taking discrete convolution in the governing difference-differential equation (\ref{qqawq112}) of the TFPP. The introduced process is called the convoluted fractional Poisson process (CFPP)  which we denote by $\{\mathcal{N}^{\alpha}_{c}(t)\}_{t\ge0}$, $0<\alpha\le1$. We define it as the stochastic process whose state probabilities $p^\alpha_{c}(n,t)=\mathrm{Pr}\{\mathcal{N}^{\alpha}_{c}(t)=n\}$, satisfy
\begin{equation}\label{con}
\partial^{\alpha}_{t}p^\alpha_{c}(n,t)=-\lambda_{n}*p^\alpha_{c}(n,t)+\lambda_{n-1}*p^\alpha_{c}(n-1,t),\  \ \ n\ge0,
\end{equation}
with initial conditions
\begin{equation*}
p^\alpha_{c}(n,0)=\begin{cases}
1,\ \ n=0,\\
0,\ \ n\ge1,
\end{cases}
\end{equation*}
and $p^\alpha_{c}(-n,t)=0$ for all $n\ge1$, $t\ge0$. 

Also, $\{\lambda_{j},\ j\in\mathbb{Z}\}$ is a non-increasing sequence of intensity parameters such that $\lambda_{j}=0$ for all $j<0$, $\lambda_{0}>0$ and $\lambda_{j}\ge0$ for all $j>0$ with $\lim\limits_{j\to\infty}\lambda_{j+1}/\lambda_{j}<1$. Thus, it follows that 
\begin{equation}\label{asdesa11}
\sum_{j=0}^{\infty}(\lambda_{j-1}-\lambda_{j})=0,
\end{equation}
as $\sum_{j=0}^{\infty}\lambda_{j}<\infty$ implies $\lambda_{j}\to0$ as $j\to\infty$.

Using (\ref{def}), the system of fractional differential equations (\ref{con}) can be rewritten as
\begin{align}\label{model}
\partial^{\alpha}_{t}p^\alpha_{c}(n,t)&=-\sum_{j=0}^{n}\lambda_{j}p^\alpha_{c}(n-j,t)+\sum_{j=0}^{n-1}\lambda_{j}p^\alpha_{c}(n-j-1,t)\nonumber\\
&=-\sum_{j=0}^{n}\lambda_{j}p^\alpha_{c}(n-j,t)+\sum_{j=1}^{n}\lambda_{j-1}p^\alpha_{c}(n-j,t)\nonumber\\
&=-\lambda_{0}p^\alpha_{c}(n,t)+\sum_{j=1}^{n}(\lambda_{j-1}-\lambda_{j})p^\alpha_{c}(n-j,t),\ \ n\ge0.
\end{align}
Note that for $\lambda_{n}=0$ for all $n\ge1$, the CFPP reduces to TFPP with intensity parameter $\lambda_{0}>0$.
\begin{remark}
Di Crescenzo {\it et al.} (2016) studied a fractional counting process $\{M^\alpha(t)\}_{t\geq0}$, $0<\alpha\leq 1$, which performs $k$ kinds of jumps of amplitude $1,2,\dots,k$ with positive rates $\Lambda_{1},\Lambda_{2},\ldots,\Lambda_{k}$ where $k\in\mathbb{N}$ is fixed. Its state probabilities $q^\alpha(n,t)=\mathrm{Pr}\{M^\alpha(t)=n\}$ satisfy (see Di Crescenzo {\it et al.} (2016), Eq. (2.3))
	\begin{equation}\label{cre}
		\partial^{\alpha}_{t}q^\alpha(n,t)=-(\Lambda_{1}+\Lambda_{2}+\dots+\Lambda_{k})q^\alpha(n,t)+	\sum_{j=1}^{\min\{n,k\}}\Lambda_{j}q^\alpha(n-j,t),\ \ n\ge0,
	\end{equation}
with
	\begin{equation*}
		q^\alpha(n,0)=\begin{cases}
			1,\ \ n=0,\\
			0,\ \ n\ge1.
		\end{cases}
	\end{equation*}
For $k=1$, the system of fractional differential equation (\ref{cre}) reduces to (\ref{qqawq112}). Thus, the TFPP follows as a particular case of $\{M^\alpha(t)\}_{t\geq0}$. It is important to note that the CFPP is not a particular case of $\{M^\alpha(t)\}_{t\geq0}$ for any choice of $k\in\mathbb{N}$. However, if we choose $\Lambda_{j}=\lambda_{j-1}-\lambda_{j}$, for all $j\ge1$, then $\Lambda_{1}+\Lambda_{2}+\dots+\Lambda_{k}=\lambda_{0}-\lambda_{k}$. As $\sum_{k=0}^{\infty}\lambda_{k}<\infty$ implies $\lambda_{k}\to0$ as $k\to\infty$, the system (\ref{cre}) reduce to $(\ref{model})$. Thus, the CFPP is obtained as a limiting process of $\{M^\alpha(t)\}_{t\geq0}$ by letting $k\to\infty$. 
\end{remark}
The following result gives the Laplace transform of the state probabilities of CFPP.
\begin{proposition}
The Laplace transform of the state probabilities $\tilde{p}^\alpha_{c}(n,s)$, $s>0$, of CFPP is given by
	\begin{equation}\label{lap}
\tilde{p}^\alpha_{c}(n,s)=\begin{cases}
\dfrac{s^{\alpha-1}}{s^{\alpha}+\lambda_{0}},\ \ n=0,\vspace*{.2cm}\\
\displaystyle\sum_{k=1}^{n}\sum_{\Theta_{n}^{k}}k!\prod_{j=1}^{n}\frac{(\lambda_{j-1}-\lambda_{j})^{k_{j}}}{k_{j}!}\frac{s^{\alpha-1}}{(s^{\alpha}+\lambda_{0})^{k+1}},\ \ n\ge1,
\end{cases}
\end{equation}
where $\Theta_{n}^{k}=\{(k_{1},k_{2},\dots,k_{n}):\sum_{j=1}^{n}k_{j}=k,\ \sum_{j=1}^{n}jk_{j}=n,\ k_{j}\in\mathbb{N}_0\}$.
\end{proposition}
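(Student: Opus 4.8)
The plan is to pass to Laplace transforms in the time variable, which converts the system (\ref{model}) into a linear recursion in the index $n$, then to resolve that recursion by a generating function in the space variable and to read off its coefficients through the ordinary Bell--polynomial identity (\ref{fm2}).

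First I would apply $\mathcal{L}(\cdot\,;s)$ to (\ref{model}); by the transform rule (\ref{lc}) and the initial data $p^\alpha_c(0,0)=1$, $p^\alpha_c(n,0)=0$ for $n\ge1$, this yields $(s^\alpha+\lambda_0)\,\tilde p^\alpha_c(0,s)=s^{\alpha-1}$, which is the $n=0$ case of (\ref{lap}), and, for $n\ge1$,
\[
(s^\alpha+\lambda_0)\,\tilde p^\alpha_c(n,s)=\sum_{j=1}^{n}(\lambda_{j-1}-\lambda_j)\,\tilde p^\alpha_c(n-j,s).
\]
Writing $u=u(s)=(s^\alpha+\lambda_0)^{-1}$ and $b_j=\lambda_{j-1}-\lambda_j\ge0$, this reads $\tilde p^\alpha_c(n,s)=u\sum_{j=1}^{n}b_j\,\tilde p^\alpha_c(n-j,s)$ for $n\ge1$.

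Next I would set $B(w)=\sum_{j\ge1}b_jw^j$, which converges absolutely for $|w|\le1$ because $\sum_jb_j=\lambda_0<\infty$ by (\ref{asdesa11}), and $F(w,s)=\sum_{n\ge0}\tilde p^\alpha_c(n,s)\,w^n$. The recursion then collapses to $F=s^{\alpha-1}u+uB(w)F$, so that
\[
F(w,s)=\frac{s^{\alpha-1}u}{1-uB(w)}=s^{\alpha-1}\sum_{k\ge0}u^{k+1}B(w)^k ;
\]
the geometric expansion is legitimate since $|uB(w)|\le u\lambda_0<1$ on $|w|\le1$, and in any case the identity may be read purely formally in $\mathbb{R}[[w]]$. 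Applying (\ref{fm2}) with $u_j\mapsto b_j$ gives $B(w)^k=\sum_{n\ge k}\hat B_{n,k}(b_1,\dots,b_{n-k+1})\,w^n$, and, comparing the coefficient of $w^n$ (using $\hat B_{n,0}=0$ for $n\ge1$ and that $B(w)^k$ has no term below $w^k$),
\[
\tilde p^\alpha_c(n,s)=\sum_{k=1}^{n}\hat B_{n,k}(b_1,\dots,b_{n-k+1})\,\frac{s^{\alpha-1}}{(s^\alpha+\lambda_0)^{k+1}},\qquad n\ge1.
\]

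Finally I would expand $\hat B_{n,k}$ by its definition over $\Lambda_n^k$ and observe that the constraints $\sum_jk_j=k$, $\sum_jjk_j=n$ force $k_j=0$ for every $j>n-k+1$: if some $k_{j_0}\ge1$ with $j_0\ge n-k+2$, the remaining $k-1$ units of the count add at least $k-1$ more to $\sum_i i\,k_i$, so $\sum_i i\,k_i\ge(n-k+2)+(k-1)>n$. Hence $\Lambda_n^k$ may be replaced by $\Theta_n^k$ without changing the sum, and inserting $b_j=\lambda_{j-1}-\lambda_j$ gives exactly (\ref{lap}). Most of this is routine Laplace calculus and coefficient extraction; the one step that calls for a little attention is precisely this bookkeeping identifying the $(n-k+1)$-variable Bell polynomial of (\ref{fm2}) with the $n$-variable index set $\Theta_n^k$ of the statement, together with the minor convergence check on $B(w)$. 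An alternative that bypasses generating functions is a direct induction on $n$ using the recursion $\hat B_{n,k}(b_1,\dots)=\sum_{j\ge1}b_j\,\hat B_{n-j,k-1}(b_1,\dots)$ (itself a consequence of (\ref{fm2})), substituted into $\tilde p^\alpha_c(n,s)=u\sum_{j=1}^{n}b_j\,\tilde p^\alpha_c(n-j,s)$.
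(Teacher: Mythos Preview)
Your proof is correct but proceeds along a genuinely different line from the paper's. The paper argues by direct induction on $n$: from the same Laplace--transformed recursion it computes $\tilde p^\alpha_c(n,s)$ explicitly for $n=0,1,2,3,4$, then assumes the formula for $n=l$ and verifies it for $n=l+1$ by substituting into the recursion. You instead package the recursion into the generating function $F(w,s)=\sum_{n\ge0}\tilde p^\alpha_c(n,s)\,w^n$, solve $F=s^{\alpha-1}u/(1-uB(w))$, and extract coefficients via the Bell--polynomial identity (\ref{fm2}). Your route is tidier: the combinatorial structure (why Bell polynomials appear at all) is immediate from (\ref{fm2}), and the only nontrivial bookkeeping is the passage from $\Lambda^k_n$ to $\Theta^k_n$, which you justify directly and which the paper records separately as Lemma~\ref{ll1}. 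As a bonus, your $F(w,s)$ is exactly the Laplace transform of the pgf, formula (\ref{pgghg1}), so that later result would come for free. The paper's induction, on the other hand, is more elementary in that it avoids formal power series entirely, though its inductive step (collapsing the double sum over $m$ and $\Theta^k_{l+1-m}$ into a single sum over $\Theta^k_{l+1}$) is stated without being spelled out. The ``alternative'' you sketch at the end---induction via the Bell recursion $\hat B_{n,k}=\sum_j b_j\hat B_{n-j,k-1}$---is essentially the paper's own argument.
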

\begin{proof}	
On applying the Laplace transform in (\ref{model}) and using (\ref{lc}), we get
	\begin{equation*}
	s^{\alpha}\tilde{p}^\alpha_{c}(n,s)-s^{\alpha-1}p^\alpha_{c}(n,0)=-\lambda_{0}\tilde{p}^\alpha_{c}(n,s)+\sum_{m=1}^{n}(\lambda_{m-1}-\lambda_{m})\tilde{p}^\alpha_{c}(n-m,s).
	\end{equation*}
	Thus,
	\begin{equation}\label{pns}
	\tilde{p}^\alpha_{c}(n,s)=(s^{\alpha}+\lambda_{0})^{-1}\left(\sum_{m=1}^{n}(\lambda_{m-1}-\lambda_{m})\tilde{p}^\alpha_{c}(n-m,s)+s^{\alpha-1}p^\alpha_{c}(n,0)\right).
	\end{equation}
Put $n=0$ in the above equation and use the initial conditions given in (\ref{con}) to obtain
\begin{equation}\label{l0}
\tilde{p}^\alpha_{c}(0,s)=\dfrac{s^{\alpha-1}}{s^{\alpha}+\lambda_{0}}.
\end{equation}	
So, the result holds for $n=0$. Next, we put $n=1$ in (\ref{pns}) to get
\begin{equation*}
\tilde{p}^\alpha_{c}(1,s)=\dfrac{(\lambda_{0}-\lambda_{1})\tilde{p}(0,s)}{s^{\alpha}+\lambda_{0}}=\dfrac{(\lambda_{0}-\lambda_{1})s^{\alpha-1}}{(s^{\alpha}+\lambda_{0})^{2}},
\end{equation*}
and the result holds for $n=1$. Now put $n=2$ in (\ref{pns}) to get
\begin{equation*}
\tilde{p}^\alpha_{c}(2,s)=\dfrac{(\lambda_{0}-\lambda_{1})^{2}s^{\alpha-1}}{(s^{\alpha}+\lambda_{0})^{3}}+\dfrac{(\lambda_{1}-\lambda_{2})s^{\alpha-1}}{(s^{\alpha}+\lambda_{0})^{2}}.
\end{equation*}
Substituting $n=3$ in (\ref{pns}), we get
\begin{equation*}
\tilde{p}^\alpha_{c}(3,s)=\dfrac{(\lambda_{0}-\lambda_{1})^{3}s^{\alpha-1}}{(s^{\alpha}+\lambda_{0})^{4}}+\dfrac{2(\lambda_{0}-\lambda_{1})(\lambda_{1}-\lambda_{2})s^{\alpha-1}}{(s^{\alpha}+\lambda_{0})^{3}}+\dfrac{(\lambda_{2}-\lambda_{3})s^{\alpha-1}}{(s^{\alpha}+\lambda_{0})^{2}}.
\end{equation*}
Now put $n=4$ in (\ref{pns}), we get
\begin{align*}
\tilde{p}^\alpha_{c}(4,s)&=\dfrac{(\lambda_{0}-\lambda_{1})^{4}s^{\alpha-1}}{(s^{\alpha}+\lambda_{0})^{5}}+\dfrac{3(\lambda_{0}-\lambda_{1})^{2}(\lambda_{1}-\lambda_{2})s^{\alpha-1}}{(s^{\alpha}+\lambda_{0})^{4}}\\
&\ \ +\dfrac{\left(2(\lambda_{0}-\lambda_{1})(\lambda_{2}-\lambda_{3})+(\lambda_{1}-\lambda_{2})^{2}\right)s^{\alpha-1}}{(s^{\alpha}+\lambda_{0})^{3}}        +\dfrac{(\lambda_{3}-\lambda_{4})s^{\alpha-1}}{(s^{\alpha}+\lambda_{0})^{2}}.
\end{align*}
Equivalently, 
\begin{equation*}
\tilde{p}^\alpha_{c}(4,s)=\sum_{k=1}^{4}\sum_{\Theta_{4}^{k}}k!\prod_{j=1}^{4}\frac{(\lambda_{j-1}-\lambda_{j})^{k_{j}}}{k_{j}!}\frac{s^{\alpha-1}}{(s^{\alpha}+\lambda_{0})^{k+1}} ,
\end{equation*}
where $\Theta_{4}^{k}=\{(k_{1},k_{2},k_{3},k_{4}):\ k_{1}+k_{2}+k_{3}+k_{4}=k,\ k_{1}+2k_{2}+3k_{3}+4k_{4}=4,\ k_{i}\in\mathbb{N}_0\}$.

 Assume the result (\ref{lap}) holds for $n=l$. From  (\ref{pns}), we have
\begin{align*}
\tilde{p}^\alpha_{c}(l+1,s)&=(s^{\alpha}+\lambda_{0})^{-1}\left(\sum_{m=1}^{l+1}(\lambda_{m-1}-\lambda_{m})\tilde{p}^\alpha_{c}(l+1-m,s)\right)\\
&=(s^{\alpha}+\lambda_{0})^{-1}\left(\sum_{m=1}^{l}(\lambda_{m-1}-\lambda_{m})\tilde{p}^\alpha_{c}(l+1-m,s)\right)+\frac{(\lambda_{l}-\lambda_{l+1})\tilde{p}^\alpha_{c}(0,s)}{(s^{\alpha}+\lambda_{0})}\\
&=\sum_{m=1}^{l}(\lambda_{m-1}-\lambda_{m})\sum_{k=1}^{l+1-m}\sum_{\Theta_{l+1-m}^{k}}k!\prod_{j=1}^{l+1-m}\frac{(\lambda_{j-1}-\lambda_{j})^{k_{j}}s^{\alpha-1}}{k_{j}!(s^{\alpha}+\lambda_{0})^{k+2}}+\frac{(\lambda_{l}-\lambda_{l+1})s^{\alpha-1}}{(s^{\alpha}+\lambda_{0})^{2}}\\
&=\sum_{k=1}^{l+1}\sum_{\Theta_{l+1}^{k}}k!\prod_{j=1}^{l+1}\frac{(\lambda_{j-1}-\lambda_{j})^{k_{j}}}{k_{j}!}\frac{s^{\alpha-1}}{(s^{\alpha}+\lambda_{0})^{k+1}}.
\end{align*}
Using the method of mathematical induction, the result (\ref{lap}) holds true for all $n\ge0$.
\end{proof}
The above result can be written in a different form by using the following result due to Kataria and Vellaisamy (2017b).
\begin{lemma}\label{ll1}
	Let $e^n_j$ denotes the $n$-tuple vector with unity at the $j$-th place and zero elsewhere. Then
	\begin{equation*}\label{2.0}
	\Theta^k_n=\left\{\sum_{j=1}^{n-k+1}k_je^n_j:\left(k_1,k_2,\ldots,k_{n-k+1}\right)\in\Lambda^k_n\right\},
	\end{equation*}
	where $\Lambda_{n}^{k}$ is given in (\ref{lambdaee11}).
\end{lemma}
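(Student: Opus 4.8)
The plan is to establish the two inclusions, the only substantive point being that every tuple in $\Theta^k_n$ is automatically supported on its first $n-k+1$ coordinates; once that is shown, the map $(k_1,\dots,k_{n-k+1})\mapsto\sum_{j=1}^{n-k+1}k_je^n_j$ (pad with zeros) is visibly a bijection between $\Lambda^k_n$ and $\Theta^k_n$.

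First I would dispose of the easy containment. If $(k_1,\dots,k_{n-k+1})\in\Lambda^k_n$, then the $n$-tuple $\mathbf{k}=\sum_{j=1}^{n-k+1}k_je^n_j$ has nonnegative integer entries, the entries sum to $\sum_{j=1}^{n-k+1}k_j=k$, and $\sum_{j=1}^{n}j(\mathbf{k})_j=\sum_{j=1}^{n-k+1}jk_j=n$; hence $\mathbf{k}\in\Theta^k_n$. So the right-hand side of the asserted identity is contained in $\Theta^k_n$. For the reverse containment, take $(k_1,\dots,k_n)\in\Theta^k_n$ and subtract the constraint $\sum_{j=1}^n k_j=k$ from $\sum_{j=1}^n jk_j=n$ to obtain
\[
\sum_{j=1}^{n}(j-1)k_j=n-k .
\]
Every summand here is nonnegative, so for each index $j$ with $k_j\ge 1$ we get $j-1\le(j-1)k_j\le n-k$, i.e. $j\le n-k+1$. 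Therefore $k_j=0$ for all $j>n-k+1$, which gives $(k_1,\dots,k_n)=\sum_{j=1}^{n-k+1}k_je^n_j$, and the truncated tuple $(k_1,\dots,k_{n-k+1})$ clearly satisfies $\sum_{j=1}^{n-k+1}k_j=k$, $\sum_{j=1}^{n-k+1}jk_j=n$ with $k_j\in\mathbb{N}_0$, hence lies in $\Lambda^k_n$. Combining the two containments proves the claim.

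I do not expect a genuine obstacle: the proof is a one-line counting observation packaged around the identity $\sum_{j=1}^n(j-1)k_j=n-k$. The only places that warrant a moment's care are the degenerate ranges of the indices — that $\Theta^k_n$ and $\Lambda^k_n$ are empty unless $1\le k\le n$, and the boundary case $k=n$, where $n-k+1=1$ and both sets collapse to the single tuple with first entry $n$ — but each of these is immediate from the same inequality, so no separate argument is needed.
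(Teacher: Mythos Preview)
Your proof is correct; the identity $\sum_{j=1}^{n}(j-1)k_j=n-k$ is exactly the right observation, and it cleanly forces $k_j=0$ for $j>n-k+1$.

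Note, however, that the paper does not prove this lemma at all: it is quoted as a known result from Kataria and Vellaisamy (2017b) and used without argument. So there is no ``paper's proof'' to compare against. What you have supplied is a self-contained elementary verification that the paper omits; this is a strict improvement in exposition, since a reader need not chase the reference for what is, as you say, a one-line counting observation.
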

Using the above result, an equivalent expression for $\tilde{p}^\alpha_{c}(n,s)$ is given by
	\begin{equation*}
	\tilde{p}^\alpha_{c}(n,s)=\begin{cases}
	\dfrac{s^{\alpha-1}}{s^{\alpha}+\lambda_{0}},\ \ n=0,\vspace*{.2cm}\\
\displaystyle\sum_{k=1}^{n}\sum_{\Lambda_{n}^{k}}k!\prod_{j=1}^{n-k+1}\frac{(\lambda_{j-1}-\lambda_{j})^{k_{j}}}{k_{j}!}\frac{s^{\alpha-1}}{(s^{\alpha}+\lambda_{0})^{k+1}},\ \ n\ge1.
	\end{cases}
	\end{equation*}
\begin{theorem}
	The one-dimensional distribution of the CFPP is given by
	\begin{equation}\label{dist}
	p^\alpha_{c}(n,t)=\begin{cases}
	E_{\alpha,1}(-\lambda_{0}t^{\alpha}),\ \ n=0,\vspace*{.2cm}\\
	\displaystyle\sum_{k=1}^{n}\sum_{\Theta_{n}^{k}}k!\prod_{j=1}^{n}\frac{(\lambda_{j-1}-\lambda_{j})^{k_{j}}}{k_{j}!} t^{k\alpha}E_{\alpha,k\alpha+1}^{k+1}(-\lambda_{0}t^{\alpha}),\ \  n\ge1,
	\end{cases}
	\end{equation}
	where $\Theta_{n}^{k}=\{(k_{1},k_{2},\dots,k_{n}):\sum_{j=1}^{n}k_{j}=k, \ \  \sum_{j=1}^{n}jk_{j}=n,\ k_{j}\in\mathbb{N}_0\}$. 
\end{theorem}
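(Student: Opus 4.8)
The plan is to obtain the one-dimensional distribution by inverting the Laplace transform $\tilde p^\alpha_c(n,s)$ established in the preceding proposition, using the known Laplace pair \eqref{mi} for the three-parameter Mittag-Leffler function. First I would treat the case $n=0$: here $\tilde p^\alpha_c(0,s)=s^{\alpha-1}/(s^\alpha+\lambda_0)$, and applying \eqref{mi} with $\gamma=\beta=1$, $x=-\lambda_0$ immediately gives $p^\alpha_c(0,t)=E_{\alpha,1}(-\lambda_0 t^\alpha)$, which is the claimed expression.

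For $n\ge 1$, the key observation is that in the expression \eqref{lap} for $\tilde p^\alpha_c(n,s)$, the only $s$-dependence sits in the factor $s^{\alpha-1}/(s^\alpha+\lambda_0)^{k+1}$, while the combinatorial coefficient $k!\prod_{j=1}^n (\lambda_{j-1}-\lambda_j)^{k_j}/k_j!$ is a constant with respect to $s$. Since the Laplace transform is linear and the double sum over $k$ and over $\Theta_n^k$ is finite, I would invert term by term. Applying \eqref{mi} with $\beta = 1$, $\gamma = k+1$, so that $s^{\alpha\gamma-\beta} = s^{\alpha(k+1)-1}$ — which does not match $s^{\alpha-1}$ directly — I instead rewrite $s^{\alpha-1}/(s^\alpha+\lambda_0)^{k+1}$ in the form needed: taking $\gamma = k+1$ and $\beta = k\alpha + 1$ gives $s^{\alpha\gamma - \beta} = s^{\alpha(k+1) - k\alpha - 1} = s^{\alpha - 1}$, exactly the numerator, and denominator $(s^\alpha - x)^\gamma = (s^\alpha + \lambda_0)^{k+1}$ with $x = -\lambda_0$. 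Hence
\[
\mathcal{L}^{-1}\!\left(\frac{s^{\alpha-1}}{(s^\alpha+\lambda_0)^{k+1}};t\right) = t^{(k\alpha+1)-1} E^{k+1}_{\alpha,k\alpha+1}(-\lambda_0 t^\alpha) = t^{k\alpha} E^{k+1}_{\alpha,k\alpha+1}(-\lambda_0 t^\alpha),
\]
valid for $s > \lambda_0^{1/\alpha}$. Substituting this into \eqref{lap} and interchanging the (finite) sums with the inverse Laplace transform yields exactly \eqref{dist}.

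The step requiring the most care is the identification of the parameters $\beta = k\alpha+1$, $\gamma = k+1$ in \eqref{mi}, i.e.\ recognizing that the power $s^{\alpha-1}$ in the numerator forces $\beta = \alpha\gamma - (\alpha-1) = \alpha(k+1) - \alpha + 1 = k\alpha + 1$ once $\gamma = k+1$ is fixed by the power $k+1$ of $(s^\alpha+\lambda_0)$ in the denominator; this is the nonobvious bookkeeping in the argument, though it is routine once set up. One should also note for completeness that $\alpha > 0$ and $k\alpha + 1 > 0$, so the hypotheses of \eqref{mi} are met, and that uniqueness of the inverse Laplace transform (for functions continuous on $[0,\infty)$ of at most exponential growth) guarantees that the resulting $p^\alpha_c(n,t)$ is the genuine state probability. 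I would close by remarking that, in view of Lemma~\ref{ll1}, the sum over $\Theta_n^k$ in \eqref{dist} may equivalently be written as a sum over $\Lambda_n^k$, paralleling the alternate form given for $\tilde p^\alpha_c(n,s)$.
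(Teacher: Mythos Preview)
Your proposal is correct and follows essentially the same approach as the paper: invert the Laplace transform \eqref{lap} term by term using the pair \eqref{mi}, identifying $\gamma=k+1$, $\beta=k\alpha+1$, $x=-\lambda_0$. The paper's proof is in fact considerably terser than yours, simply stating that applying $\mathcal{L}^{-1}$ to \eqref{lap} and using \eqref{mi} yields \eqref{dist}; your explicit tracking of the parameter identification and the remarks on finiteness of the sums and uniqueness of the inverse transform are welcome elaborations but not additional ideas.
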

\begin{proof}
	Taking inverse Laplace transform in (\ref{lap}), we get
		\begin{equation*}
	\mathcal{L}^{-1}\left(\tilde{p}^\alpha_{c}(n,s);t\right)=\begin{cases}
	\mathcal{L}^{-1}\left(\dfrac{s^{\alpha-1}}{s^{\alpha}+\lambda_{0}};t\right),\ \ n=0,\vspace*{.2cm}\\
	\mathcal{L}^{-1}\left(\displaystyle\sum_{k=1}^{n}\sum_{\Theta_{n}^{k}}k!\prod_{j=1}^{n}\frac{(\lambda_{j-1}-\lambda_{j})^{k_{j}}}{k_{j}!}\frac{s^{\alpha-1}}{(s^{\alpha}+\lambda_{0})^{k+1}};t\right),\ \ n\ge1.
	\end{cases}
	\end{equation*}
	Using (\ref{mi}), the above equation reduces to (\ref{dist}).
\end{proof}
The distribution of TFPP follows as a particular case of CFPP.
\begin{corollary}
Let $\lambda_{0}=\lambda$ and $\lambda_{n}=0$ for all $n\ge1$. Then,
	\begin{equation*}\label{dist1qa}
		p^\alpha(n,t)=\frac{(\lambda t^{\alpha})^{n}}{n!}\sum_{j=0}^{\infty}\frac{(j+n)!}{j!}\frac{(-\lambda t^{\alpha})^{j}}{\Gamma((j+n)\alpha+1)},\ \ n\geq0,
	\end{equation*}
which is the distribution of TFPP. 
\end{corollary}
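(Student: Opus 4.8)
The plan is to specialize the closed form (\ref{dist}) for $p^\alpha_{c}(n,t)$ to the sequence $\lambda_{0}=\lambda$, $\lambda_{j}=0$ for all $j\ge1$, and then expand the resulting three-parameter Mittag-Leffler function via its series definition (\ref{mit}). First I would record that for this choice of intensities the differences occurring in (\ref{dist}) are $\lambda_{0}-\lambda_{1}=\lambda$ and $\lambda_{j-1}-\lambda_{j}=0$ for every $j\ge2$; simultaneously the system (\ref{model}) collapses to (\ref{qqawq112}), so the underlying process is indeed the TFPP with intensity parameter $\lambda$, and it suffices to identify its one-dimensional law.

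Next, for $n\ge1$ I would determine which tuples in $\Theta_{n}^{k}$ actually contribute to the sum in (\ref{dist}). Since the factor $(\lambda_{j-1}-\lambda_{j})^{k_{j}}$ vanishes whenever $j\ge2$ and $k_{j}\ge1$, only the tuple with $k_{j}=0$ for all $j\ge2$ survives; the defining constraints $\sum_{j=1}^{n}k_{j}=k$ and $\sum_{j=1}^{n}jk_{j}=n$ then force $k_{1}=k$ and $k_{1}=n$, hence $k=n$ and $(k_{1},k_{2},\dots,k_{n})=(n,0,\dots,0)$. Therefore the double sum in (\ref{dist}) reduces to the single term $k!\,(\lambda^{k}/k!)\,t^{k\alpha}E^{k+1}_{\alpha,k\alpha+1}(-\lambda t^{\alpha})$ with $k=n$, giving $p^{\alpha}(n,t)=(\lambda t^{\alpha})^{n}E^{n+1}_{\alpha,n\alpha+1}(-\lambda t^{\alpha})$ for $n\ge1$, while the $n=0$ line of (\ref{dist}) is $E_{\alpha,1}(-\lambda t^{\alpha})$.

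Finally, I would substitute the series (\ref{mit}), namely $E^{n+1}_{\alpha,n\alpha+1}(-\lambda t^{\alpha})=\frac{1}{n!}\sum_{j=0}^{\infty}\frac{(j+n)!}{j!}\frac{(-\lambda t^{\alpha})^{j}}{\Gamma((j+n)\alpha+1)}$, multiply through by $(\lambda t^{\alpha})^{n}$, and note that the resulting $j$-indexed series also reproduces the $n=0$ case: for $n=0$ it collapses to $\sum_{j=0}^{\infty}(-\lambda t^{\alpha})^{j}/\Gamma(j\alpha+1)=E_{\alpha,1}(-\lambda t^{\alpha})$, matching the first line of (\ref{dist}). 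Hence the stated formula holds uniformly for all $n\ge0$. I do not anticipate any genuine obstacle here; the one step requiring care is the combinatorial bookkeeping that shows $\Theta_{n}^{k}$ degenerates to the single tuple $(n,0,\dots,0)$ with $k=n$, after which the rest is direct substitution into the known series for the Mittag-Leffler functions.
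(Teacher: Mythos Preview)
Your proposal is correct and follows exactly the approach of the paper: specialize (\ref{dist}) to obtain $p^{\alpha}(n,t)=(\lambda t^{\alpha})^{n}E^{n+1}_{\alpha,n\alpha+1}(-\lambda t^{\alpha})$ and then expand via (\ref{mit}). If anything, your write-up is more detailed than the paper's, since you explicitly justify the combinatorial reduction of $\Theta_{n}^{k}$ to the single tuple $(n,0,\dots,0)$, which the paper leaves implicit.
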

\begin{proof}
On substituting $\lambda_{0}=\lambda$ and $\lambda_{n}=0$ for $n\ge1$ in (\ref{dist}), we get
\begin{equation*}
p^{\alpha}(n,t)=(\lambda t^{\alpha})^nE_{\alpha,n\alpha+1}^{n+1}(-\lambda t^{\alpha}),\ \  n\ge0.
\end{equation*}
On using (\ref{mit}), the result follows.
\end{proof}
Using Lemma \ref{ll1}, we can rewrite (\ref{dist}) as
\begin{equation}\label{dssdew1}
p^\alpha_{c}(n,t)=\begin{cases}
E_{\alpha,1}(-\lambda_{0}t^{\alpha}),\ \ n=0,\vspace*{.2cm}\\
\displaystyle\sum_{k=1}^{n}\sum_{\Lambda_{n}^{k}}k!\prod_{j=1}^{n-k+1}\frac{(\lambda_{j-1}-\lambda_{j})^{k_{j}}}{k_{j}!} t^{k\alpha}E_{\alpha,k\alpha+1}^{k+1}(-\lambda_{0}t^{\alpha}),\ \ n\ge1.
\end{cases}
\end{equation}
Note that,
\begin{align*}
\sum_{n=0}^{\infty}p^\alpha_{c}(n,t)&=p^\alpha_{c}(0,t)+\sum_{n=1}^{\infty}p^\alpha_{c}(n,t)\\
&=E_{\alpha,1}(-\lambda_{0}t^{\alpha})+\sum_{n=1}^{\infty}\sum_{k=1}^{n}\sum_{\Lambda_{n}^{k}}k!\prod_{j=1}^{n-k+1}\frac{(\lambda_{j-1}-\lambda_{j})^{k_{j}}}{k_{j}!} t^{k\alpha}E_{\alpha,k\alpha+1}^{k+1}(-\lambda_{0}t^{\alpha})\\
&=E_{\alpha,1}(-\lambda_{0}t^{\alpha})+\sum_{k=1}^{\infty}t^{k\alpha}E_{\alpha,k\alpha+1}^{k+1}(-\lambda_{0}t^{\alpha})\sum_{n=k}^{\infty}\sum_{\Lambda_{n}^{k}}k!\prod_{j=1}^{n-k+1}\frac{(\lambda_{j-1}-\lambda_{j})^{k_{j}}}{k_{j}!}\\
&=E_{\alpha,1}(-\lambda_{0}t^{\alpha})+\sum_{k=1}^{\infty}\left(\sum_{j=1}^{\infty}(\lambda_{j-1}-\lambda_{j})\right)^{k} t^{k\alpha}E_{\alpha,k\alpha+1}^{k+1}(-\lambda_{0}t^{\alpha}),\ \ \text{(using\ (\ref{fm2}))}\\
&=E_{\alpha,1}(-\lambda_{0}t^{\alpha})+\sum_{k=1}^{\infty} (\lambda_{0}t^{\alpha})^{k}E_{\alpha,k\alpha+1}^{k+1}(-\lambda_{0}t^{\alpha})\\
&=\sum_{k=0}^{\infty} (\lambda_{0}t^{\alpha})^{k}E_{\alpha,k\alpha+1}^{k+1}(-\lambda_{0}t^{\alpha})=1,
\end{align*}	
where in the last step we have used (\ref{formula}). Thus, it follows that (\ref{dist}) is a valid distribution.
\begin{remark}
	Let the random variable $W^{\alpha}_{c}$ be the waiting time of the first convoluted fractional Poisson event. Then, the distribution of $W^{\alpha}_{c}$ is given by
	\begin{equation*}
	\mathrm{Pr}\{W^{\alpha}_{c}>t\}=\mathrm{Pr}\{\mathcal{N}^{\alpha}_{c}(t)=0\}=E_{\alpha,1}(-\lambda_{0}t^{\alpha}), \ t>0,
	\end{equation*}
	which coincides with the first waiting time of TFPP (see Beghin and Orsingher (2009)).	However, the one-dimensional distributions of TFPP and CFPP differ. Thus, the fact that the TFPP is a renewal process (see Meerschaert {\it et al.} (2011)) implies that the CFPP is not a renewal process.	
\end{remark}
The next result gives the probability generating function (pgf) of CFPP.
\begin{proposition}
	The pgf $G^{\alpha}_{c}(u,t)=\mathbb{E}(u^{\mathcal{N}^{\alpha}_{c}(t)})$ of CFPP is given by
\begin{equation}\label{pgfa}
G^{\alpha}_{c}(u,t)=E_{\alpha,1}\left(\sum_{j=0}^{\infty}u^{j}(\lambda_{j-1}-\lambda_{j})t^{\alpha}\right),\ \ |u|\le1.
\end{equation}
\end{proposition}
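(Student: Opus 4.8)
The plan is to read off $G^{\alpha}_{c}(u,t)=\sum_{n=0}^{\infty}u^{n}p^\alpha_{c}(n,t)$ directly from the explicit one‑dimensional distribution \eqref{dssdew1}, essentially repeating the computation already used to check that $\sum_{n}p^\alpha_{c}(n,t)=1$, but now keeping track of the extra factor $u^{n}$. First I would isolate the $n=0$ term, which contributes $E_{\alpha,1}(-\lambda_{0}t^{\alpha})$, and for $n\ge1$ use that on the index set $\Lambda_{n}^{k}$ one has $\sum_{j=1}^{n-k+1}jk_{j}=n$, so $u^{n}=\prod_{j=1}^{n-k+1}u^{jk_{j}}$. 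Folding each $u^{jk_{j}}$ into the product over $j$ replaces $(\lambda_{j-1}-\lambda_{j})^{k_{j}}$ by $\bigl(u^{j}(\lambda_{j-1}-\lambda_{j})\bigr)^{k_{j}}$.

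Next I would interchange the order of summation so that the $k$–sum is outermost, obtaining $G^{\alpha}_{c}(u,t)=\sum_{k=0}^{\infty}t^{k\alpha}E_{\alpha,k\alpha+1}^{k+1}(-\lambda_{0}t^{\alpha})\sum_{n\ge k}\sum_{\Lambda_{n}^{k}}k!\prod_{j=1}^{n-k+1}\frac{(u^{j}(\lambda_{j-1}-\lambda_{j}))^{k_{j}}}{k_{j}!}$ (with the convention that the $k=0$ term is $E_{\alpha,1}(-\lambda_{0}t^{\alpha})$). Applying the ordinary Bell‑polynomial identity \eqref{fm2} with its formal variable set equal to $1$ and with $u_{j}\mapsto u^{j}(\lambda_{j-1}-\lambda_{j})$, the inner double sum collapses to $\bigl(\sum_{j=1}^{\infty}u^{j}(\lambda_{j-1}-\lambda_{j})\bigr)^{k}$, so that $G^{\alpha}_{c}(u,t)=\sum_{k=0}^{\infty}\bigl(\sum_{j=1}^{\infty}u^{j}(\lambda_{j-1}-\lambda_{j})\bigr)^{k}t^{k\alpha}E_{\alpha,k\alpha+1}^{k+1}(-\lambda_{0}t^{\alpha})$. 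Finally I would invoke \eqref{formula} with $\beta=1$, $x=-\lambda_{0}$ and $y=\sum_{j=1}^{\infty}u^{j}(\lambda_{j-1}-\lambda_{j})$ to get $G^{\alpha}_{c}(u,t)=E_{\alpha,1}((x+y)t^{\alpha})$, and then note that $\lambda_{-1}=0$ forces $-\lambda_{0}=u^{0}(\lambda_{-1}-\lambda_{0})$, so $x+y=\sum_{j=0}^{\infty}u^{j}(\lambda_{j-1}-\lambda_{j})$, which is exactly \eqref{pgfa}.

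The main obstacle is analytic rather than algebraic: one must justify the interchange of the (triple) infinite summation and the termwise application of \eqref{fm2}. For $|u|\le1$ this follows from absolute convergence, since $\{\lambda_{j}\}$ non‑increasing to $0$ gives $\sum_{j}\lvert\lambda_{j-1}-\lambda_{j}\rvert=\lambda_{0}<\infty$, hence $\bigl\lvert\sum_{j}u^{j}(\lambda_{j-1}-\lambda_{j})\bigr\rvert\le\lambda_{0}$, and the rearranged series is then dominated by $\sum_{k}\lambda_{0}^{k}t^{k\alpha}\lvert E_{\alpha,k\alpha+1}^{k+1}(-\lambda_{0}t^{\alpha})\rvert$, which converges by the same bound that makes \eqref{formula} meaningful. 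An alternative route that bypasses \eqref{fm2} altogether is to multiply the governing system \eqref{model} by $u^{n}$ and sum over $n\ge0$: this yields the fractional relaxation equation $\partial_{t}^{\alpha}G^{\alpha}_{c}(u,t)=\bigl(\sum_{j=0}^{\infty}u^{j}(\lambda_{j-1}-\lambda_{j})\bigr)G^{\alpha}_{c}(u,t)$ with $G^{\alpha}_{c}(u,0)=1$, whose unique solution — read off by taking Laplace transforms via \eqref{lc} and \eqref{mi} — is the claimed Mittag‑Leffler function. I would likely present the first argument and remark on the second.
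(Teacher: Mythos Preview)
Your argument is correct. The primary route you describe --- summing the explicit distribution \eqref{dssdew1} against $u^{n}$, absorbing $u^{n}=\prod_{j}u^{jk_{j}}$ into the product, collapsing via \eqref{fm2}, and then applying \eqref{formula} --- is sound, and your convergence remark for $|u|\le1$ is adequate.

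The paper takes a different path: it computes the Laplace transform $\tilde{G}^{\alpha}_{c}(u,s)=\sum_{n}u^{n}\tilde{p}^{\alpha}_{c}(n,s)$ from \eqref{lap}, uses \eqref{fm2} to recognize a geometric series in $(s^{\alpha}+\lambda_{0})^{-1}$, sums it to $s^{\alpha-1}\bigl(s^{\alpha}-\sum_{j}u^{j}(\lambda_{j-1}-\lambda_{j})\bigr)^{-1}$, and then inverts via \eqref{mi}. Your approach stays entirely in the time domain and replaces the Laplace inversion by the Mittag--Leffler summation identity \eqref{formula}; in fact it is precisely the method the paper itself uses later to derive the mgf \eqref{mjhgss}. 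The trade-off is that the paper's route produces the closed form of $\tilde{G}^{\alpha}_{c}(u,s)$ as a by-product, which is subsequently reused (for instance in the subordination Theorem via \eqref{mjhgqq1}), whereas your argument is slightly shorter and self-contained. Your alternative route through the fractional relaxation equation is also discussed in the paper immediately after the Proposition (see \eqref{pgf} and \eqref{ltpgf}), so either presentation aligns with material the authors develop.
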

\begin{proof}
The Laplace transform of the pgf of CFPP can be obtained as follows:
\begin{align}
\tilde{G}^{\alpha}_{c}(u,s)&=\int_{0}^{\infty}e^{-st}G^{\alpha}_{c}(u,t)\mathrm{d}t,\ \ s>0 \nonumber\\
&=\int_{0}^{\infty}e^{-st}\sum_{n=0}^{\infty}u^{n}p^\alpha_{c}(n,t)\mathrm{d}t\nonumber\\
&=\int_{0}^{\infty}e^{-st}\left(E_{\alpha,1}(-\lambda_{0}t^{\alpha})+\sum_{n=1}^{\infty}u^{n}\sum_{k=1}^{n}\sum_{\Lambda_{n}^{k}}k!\prod_{j=1}^{n-k+1}\frac{(\lambda_{j-1}-\lambda_{j})^{k_{j}}}{k_{j}!} t^{k\alpha}E_{\alpha,k\alpha+1}^{k+1}(-\lambda_{0}t^{\alpha})\right)\mathrm{d}t\nonumber\\
&=\dfrac{s^{\alpha-1}}{s^{\alpha}+\lambda_{0}}+\sum_{n=1}^{\infty}u^{n}\sum_{k=1}^{n}\sum_{\Lambda_{n}^{k}}k!\prod_{j=1}^{n-k+1}\frac{(\lambda_{j-1}-\lambda_{j})^{k_{j}}}{k_{j}!}\frac{s^{\alpha-1}}{(s^{\alpha}+\lambda_{0})^{k+1}},\ \ (\mathrm{using}\ (\ref{mi}))\label{mjhgqq1}\\
&=\dfrac{s^{\alpha-1}}{s^{\alpha}+\lambda_{0}}\left(1+\sum_{k=1}^{\infty}\frac{1}{(s^{\alpha}+\lambda_{0})^{k}}\sum_{n=k}^{\infty}\sum_{\Lambda_{n}^{k}}k!\prod_{j=1}^{n-k+1}\frac{(\lambda_{j-1}-\lambda_{j})^{k_{j}}}{k_{j}!} u^{n}\right)\nonumber\\
&=\dfrac{s^{\alpha-1}}{s^{\alpha}+\lambda_{0}}\left(1+\sum_{k=1}^{\infty}\left(\frac{\sum_{j=1}^{\infty}u^{j}(\lambda_{j-1}-\lambda_{j}) }{s^{\alpha}+\lambda_{0}}\right)^{k}\right),\ \ (\mathrm{using}\ (\ref{fm2}))\nonumber\\
&=\dfrac{s^{\alpha-1}}{s^{\alpha}+\lambda_{0}}\sum_{k=0}^{\infty}\left(\frac{\sum_{j=1}^{\infty}u^{j}(\lambda_{j-1}-\lambda_{j}) }{s^{\alpha}+\lambda_{0}}\right)^{k}\nonumber\\
&=s^{\alpha-1}\left(s^{\alpha}-\sum_{j=0}^{\infty}u^{j}(\lambda_{j-1}-\lambda_{j})\right)^{-1},\label{pgghg1}
\end{align}
which on using (\ref{mi}) gives (\ref{pgfa}).
\end{proof}
Next, we show that the pgf of CFPP solves the following differential equation:
\begin{equation}\label{pgf}
\partial^{\alpha}_{t}G^{\alpha}_{c}(u,t)=G^{\alpha}_{c}(u,t)\sum_{j=0}^{\infty}u^{j}(\lambda_{j-1}-\lambda_{j}),\ \ G^{\alpha}_{c}(u,0)=1. 
\end{equation}
On taking Caputo derivative in $G^{\alpha}_{c}(u,t)=\sum_{n=0}^{\infty}u^{n}p^\alpha_{c}(n,t)$, we get
\begin{align*}
\partial^{\alpha}_{t}G^{\alpha}_{c}(u,t)&=\sum_{n=0}^{\infty}u^{n}\partial^{\alpha}_{t}p^\alpha_{c}(n,t)\\
&=\sum_{n=0}^{\infty}u^{n}\left(-\lambda_{0}p^\alpha_{c}(n,t)+\sum_{j=1}^{n}(\lambda_{j-1}-\lambda_{j})p^\alpha_{c}(n-j,t)\right),\ \ (\mathrm{using}\ (\ref{model}))\\
&=-\lambda_{0}G^{\alpha}_{c}(u,t)+\sum_{n=0}^{\infty}\sum_{j=1}^{n}u^{n}(\lambda_{j-1}-\lambda_{j})p^\alpha_{c}(n-j,t)\\
&=-\lambda_{0}G^{\alpha}_{c}(u,t)+\sum_{j=1}^{\infty}\sum_{n=j}^{\infty}u^{n}(\lambda_{j-1}-\lambda_{j})p^\alpha_{c}(n-j,t)\\
&=-\lambda_{0}G^{\alpha}_{c}(u,t)+\sum_{j=1}^{\infty}\sum_{n=0}^{\infty}u^{n+j}(\lambda_{j-1}-\lambda_{j})p^\alpha_{c}(n,t)\\
&=-\lambda_{0}G^{\alpha}_{c}(u,t)+G^{\alpha}_{c}(u,t)\sum_{j=1}^{\infty}u^{j}(\lambda_{j-1}-\lambda_{j})\\
&=G^{\alpha}_{c}(u,t)\sum_{j=0}^{\infty}u^{j}(\lambda_{j-1}-\lambda_{j}).
\end{align*}
Note that the Laplace transform of the pgf of CFPP can be also obtained from the above result as follows: By taking Laplace transform in (\ref{pgf}), we get
\begin{equation*}
s^{\alpha}\tilde{G}^{\alpha}_{c}(u,s)-s^{\alpha-1}G^{\alpha}_{c}(u,0)=\tilde{G}^{\alpha}_{c}(u,s)\sum_{j=0}^{\infty}u^{j}(\lambda_{j-1}-\lambda_{j}).
\end{equation*}
Thus,
\begin{equation}\label{ltpgf}
\tilde{G}^{\alpha}_{c}(u,s)=s^{\alpha-1}\left(s^{\alpha}-\sum_{j=0}^{\infty}u^{j}(\lambda_{j-1}-\lambda_{j})\right)^{-1},
\end{equation}
which coincides with (\ref{pgghg1}).
\begin{remark}
If the difference of intensities is a constant, {\it i.e.}, $\lambda_{j-1}-\lambda_{j}=\delta$ for all $j\ge1$ then (\ref{pgf}) reduces to
\begin{equation*}
\partial^{\alpha}_{t}G^{\alpha}_{c}(u,t)=G^{\alpha}_{c}(u,t)\left(\frac{\delta u}{1-u}-\lambda_{0}\right).
\end{equation*}
\end{remark}
Next, we obtain the mean and variance of CFPP using its pgf. From (\ref{pgfa}), we have 
\begin{equation}\label{smpgf}
G^{\alpha}_{c}(u,t)=\sum_{k=0}^{\infty}\frac{t^{k\alpha}}{\Gamma(k\alpha+1)}\left(\sum_{j=0}^{\infty}(\lambda_{j-1}-\lambda_{j}) u^{j}\right)^{k}.
\end{equation}	
On taking the derivatives, we get
\begin{equation*}
\frac{\partial G^{\alpha}_{c}(u,t)}{\partial u} =\sum_{k=1}^{\infty}\frac{kt^{k\alpha}}{\Gamma(k\alpha+1)}\left(\sum_{j=0}^{\infty}(\lambda_{j-1}-\lambda_{j}) u^{j}\right)^{k-1}\left(\sum_{j=1}^{\infty}(\lambda_{j-1}-\lambda_{j})j u^{j-1}\right).
\end{equation*}
and
\begin{align*}
\frac{\partial^{2}G^{\alpha}_{c}(u,t)}{\partial u^{2}} &=\sum_{k=2}^{\infty}\frac{k(k-1)t^{k\alpha}}{\Gamma(k\alpha+1)}\left(\sum_{j=0}^{\infty}(\lambda_{j-1}-\lambda_{j}) u^{j}\right)^{^{k-2}}\left(\sum_{j=1}^{\infty}(\lambda_{j-1}-\lambda_{j})j u^{j-1}\right)^{2}\\
&\ \ \ \ +\sum_{k=1}^{\infty}\frac{kt^{k\alpha}}{\Gamma(k\alpha+1)}\left(\sum_{j=0}^{\infty}(\lambda_{j-1}-\lambda_{j}) u^{j}\right)^{k-1}\left(\sum_{j=2}^{\infty}(\lambda_{j-1}-\lambda_{j})j(j-1)u^{j-2}\right).
\end{align*}
Now, the mean of CFPP is given by
\begin{align}\label{wswee11}
\mathbb{E}\left(\mathcal{N}^{\alpha}_{c}(t)\right)&=\frac{\partial G^{\alpha}_{c}(u,t)}{\partial u}\bigg|_{u=1}\nonumber\\
&=\sum_{k=1}^{\infty}\frac{kt^{k\alpha}}{\Gamma(k\alpha+1)}\left(\sum_{j=0}^{\infty}(\lambda_{j-1}-\lambda_{j})\right)^{k-1}\left(\sum_{j=1}^{\infty}(\lambda_{j-1}-\lambda_{j})j\right)\nonumber\\
&=\frac{t^{\alpha}}{\Gamma(\alpha+1)}\sum_{j=0}^{\infty}\lambda_{j},
\end{align}
using (\ref{asdesa11}) in the last step. Also, its variance can be obtained as follows:
\begin{align*}
\mathbb{E}\left(\mathcal{N}^{\alpha}_{c}(t)(\mathcal{N}^{\alpha}_{c}(t)-1)\right)&=\frac{\partial^{2} G^{\alpha}_{c}(u,t)}{\partial u^{2}}\bigg|_{u=1}\\
&=\sum_{k=2}^{\infty}\frac{k(k-1)t^{k\alpha}}{\Gamma(k\alpha+1)}\left(\sum_{j=0}^{\infty}(\lambda_{j-1}-\lambda_{j}) \right)^{^{k-2}}\left(\sum_{j=1}^{\infty}(\lambda_{j-1}-\lambda_{j})j \right)^{2}\\
&\ \ \ \ +\sum_{k=1}^{\infty}\frac{kt^{k\alpha}}{\Gamma(k\alpha+1)}\left(\sum_{j=0}^{\infty}(\lambda_{j-1}-\lambda_{j}) \right)^{k-1}\left(\sum_{j=2}^{\infty}(\lambda_{j-1}-\lambda_{j})j(j-1) \right)\\
&=\frac{2t^{2\alpha}}{\Gamma(2\alpha+1)}\left(\sum_{j=0}^{\infty}\lambda_{j}\right)^{2}+\frac{2t^{\alpha}}{\Gamma(\alpha+1)}\sum_{j=1}^{\infty}j\lambda_{j}.
\end{align*}	
Thus, 
\begin{equation*}
\mathbb{E}\left(\mathcal{N}^{\alpha}_{c}(t)^2\right)=\frac{2t^{2\alpha}}{\Gamma(2\alpha+1)}\left(\sum_{j=0}^{\infty}\lambda_{j}\right)^{2}+\frac{2t^{\alpha}}{\Gamma(\alpha+1)}\sum_{j=1}^{\infty}j\lambda_{j}+\frac{t^{\alpha}}{\Gamma(\alpha+1)}\sum_{j=0}^{\infty}\lambda_{j}.
\end{equation*}
Hence, the variance of CFPP is given by
\begin{equation}\label{var}
\operatorname{ Var}\left(\mathcal{N}^{\alpha}_{c}(t)\right)=\frac{t^{\alpha}\sum_{j=0}^{\infty}\lambda_{j}}{\Gamma(\alpha+1)}+\frac{2t^{\alpha}\sum_{j=1}^{\infty}j\lambda_{j}}{\Gamma(\alpha+1)}+\frac{2\left(t^{\alpha}\sum_{j=0}^{\infty}\lambda_{j}\right)^{2}}{\Gamma(2\alpha+1)}-\frac{\left(t^{\alpha}\sum_{j=0}^{\infty}\lambda_{j}\right)^{2}}{\Gamma^2(\alpha+1)}.
\end{equation}
The pgf of CFPP can also be utilized to obtain its factorial moments as follows:
\begin{proposition}
The $r$th factorial moment of the CFPP $\psi^\alpha_c(r,t)=	\mathbb{E}(\mathcal{N}^{\alpha}_{c}(t)(\mathcal{N}^{\alpha}_{c}(t)-1)\dots(\mathcal{N}^{\alpha}_{c}(t)-r+1))$, $r\ge1$, is given by
		\begin{equation*}
			\psi^\alpha_c(r,t)=r!\sum_{k=1}^{r}\frac{t^{k\alpha}}{\Gamma(k\alpha+1)}\underset{m_j\in\mathbb{N}_0}{\underset{\sum_{j=1}^km_j=r}{\sum}}\prod_{\ell=1}^{k}\left(\frac{1}{m_\ell!}\sum_{j=0}^{\infty}(j)_{m_\ell}(\lambda_{j-1}-\lambda_{j})\right),
		\end{equation*}
		where $(j)_{m_\ell}=j(j-1)\dots(j-m_\ell+1)$ denotes the falling factorial.
	\end{proposition}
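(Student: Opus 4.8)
The plan is to extract the factorial moments from the probability generating function by repeated differentiation, exactly as the mean and variance were obtained just above. Put $h(u)=\sum_{j=0}^{\infty}(\lambda_{j-1}-\lambda_{j})u^{j}$; by (\ref{smpgf}) we have $G^{\alpha}_{c}(u,t)=\sum_{k=0}^{\infty}t^{k\alpha}h(u)^{k}/\Gamma(k\alpha+1)$, and since $\psi^\alpha_c(r,t)=\partial_u^{r}G^{\alpha}_{c}(u,t)\big|_{u=1}$ for $r\ge1$, the task is to differentiate this series $r$ times in $u$ and set $u=1$. First I would record that the hypothesis $\lim_{j\to\infty}\lambda_{j+1}/\lambda_{j}<1$ forces $\lambda_{j}$, hence the differences $\lambda_{j-1}-\lambda_{j}$, to decay geometrically; consequently $h$ is analytic on a disk of radius strictly bigger than $1$, so $G^{\alpha}_{c}(\cdot,t)$ is analytic there too and every term-by-term differentiation and series rearrangement used below is legitimate in a neighbourhood of $u=1$.

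Next I would apply the multinomial version of the Leibniz rule,
\begin{equation*}
\frac{\partial^{r}}{\partial u^{r}}h(u)^{k}=\sum_{\substack{m_{1}+\cdots+m_{k}=r\\ m_{\ell}\in\mathbb{N}_0}}\frac{r!}{m_{1}!\cdots m_{k}!}\prod_{\ell=1}^{k}h^{(m_{\ell})}(u),
\end{equation*}
which follows by an immediate induction on $k$ from the ordinary Leibniz rule. Differentiating $u^{j}$ termwise gives $h^{(m)}(1)=\sum_{j=0}^{\infty}(j)_{m}(\lambda_{j-1}-\lambda_{j})$ for every $m\ge0$, and substituting this into the differentiated series for $G^{\alpha}_{c}$ produces a triple sum over $k$, over ordered tuples $(m_{1},\dots,m_{k})$, and over $j$.

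The decisive observation is that $h^{(0)}(1)=h(1)=\sum_{j=0}^{\infty}(\lambda_{j-1}-\lambda_{j})=0$ by (\ref{asdesa11}). Hence any tuple $(m_{1},\dots,m_{k})$ with some $m_{\ell}=0$ contributes zero; in particular only the terms with $k\le r$ survive, so the sum over $k$ truncates to $k=1,\dots,r$, and the inner summation can harmlessly be written over all $m_{j}\in\mathbb{N}_0$. Factoring the $r!$ out front and attaching each $1/m_{\ell}!$ to its inner $j$-sum gives exactly the claimed formula. As sanity checks, $r=1$ recovers (\ref{wswee11}) and $r=2$ recovers the second factorial moment used to compute (\ref{var}).

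The only genuinely delicate point is the interchange of the $r$-fold differentiation with the two infinite summations and the passage to the boundary value $u=1$; as indicated, this is handled once and for all by the geometric decay of $\{\lambda_{j}\}$, which makes all the series involved, together with all their $u$-derivatives, converge uniformly on a complex neighbourhood of the closed unit disk. (Alternatively, one can avoid explicit differentiation: substitute $u=1+v$, expand $h(1+v)=\sum_{m\ge1}v^{m}(m!)^{-1}\sum_{j}(j)_{m}(\lambda_{j-1}-\lambda_{j})$ by the binomial theorem, apply (\ref{fm2}) to each $h(1+v)^{k}$, and read off $\psi^\alpha_c(r,t)=r!$ times the coefficient of $v^{r}$ in $G^{\alpha}_{c}(1+v,t)$.)
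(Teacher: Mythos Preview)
Your proof is correct and is at bottom the same computation as the paper's: differentiate the pgf $r$ times, use the multinomial Leibniz rule for $h(u)^{k}$, evaluate $h^{(m)}(1)=\sum_{j}(j)_{m}(\lambda_{j-1}-\lambda_{j})$, and invoke $h(1)=0$ from (\ref{asdesa11}) to kill every term with some $m_\ell=0$, thereby truncating the $k$-sum at $r$. The organizational difference is that the paper treats $G^{\alpha}_{c}(u,t)=E_{\alpha,1}(t^{\alpha}h(u))$ as a composition and applies the Fa\`a di Bruno machinery (Johnson's formula, the auxiliary $A_{r,k}$, and the identity (\ref{re}) to evaluate $E^{(k)}_{\alpha,1}(0)=k!/\Gamma(k\alpha+1)$), whereas you work directly from the already-expanded power series (\ref{smpgf}) and differentiate $h(u)^{k}$ term by term. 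Your route is a little more elementary---it sidesteps Fa\`a di Bruno and the Mittag--Leffler derivative identity---and you also supply the analytic justification (geometric decay of $\lambda_j$ giving analyticity of $h$ past the unit circle) that the paper leaves implicit; the paper's route, on the other hand, makes the structure of the answer as ``$k$th derivative of the outer function times a combinatorial inner piece'' more visible. Either way the computation and the final expression coincide.
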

	\begin{proof}
		From (\ref{pgfa}), we get
		\begin{align}\label{ttt}
			\psi^\alpha_c(r,t)&=\frac{\partial^{r}G^{\alpha}_{c}(u,t)}{\partial u^{r}}\bigg|_{u=1}\nonumber\\
			&=\sum_{k=0}^{r}\frac{1}{k!}E^{(k)}_{\alpha,1}\left(t^{\alpha}\sum_{j=0}^{\infty}(\lambda_{j-1}-\lambda_{j})u^{j}\right)	\left.A_{r,k}\left(t^{\alpha}\sum_{j=0}^{\infty}(\lambda_{j-1}-\lambda_{j})u^{j}\right)\right|_{u=1},
		\end{align}
		where we have used the $r$th derivative of composition of two functions (see Johnson (2002), Eq. (3.3)). Here,
		\begin{align}\label{mkgtrr4543t}
			A_{r,k}&\left(t^{\alpha}\sum_{j=0}^{\infty}(\lambda_{j-1}-\lambda_{j})u^{j}\right)\Bigg|_{u=1}\nonumber\\
			&=\sum_{m=0}^{k}\frac{k!}{m!(k-m)!}\left(-t^{\alpha}\sum_{j=0}^{\infty}(\lambda_{j-1}-\lambda_{j})u^{j}\right)^{k-m}\frac{\mathrm{d}^{r}}{\mathrm{d}u
				^{^{r}}}\left(t^{\alpha}\sum_{j=0}^{\infty}(\lambda_{j-1}-\lambda_{j})u^{j}\right)^{m}\Bigg|_{u=1}\nonumber\\
			&=t^{k\alpha }\frac{\mathrm{d}^{r}}{\mathrm{d}u^{^{r}}}\left(\sum_{j=0}^{\infty}(\lambda_{j-1}-\lambda_{j})u^{j}\right)^{k}\Bigg|_{u=1},
		\end{align}
		where the last step follows by using (\ref{asdesa11}). From (\ref{re}), we get
		\begin{align}\label{ppp}
			E^{(k)}_{\alpha,1}\left(t^{\alpha}\sum_{j=0}^{\infty}(\lambda_{j-1}-\lambda_{j})u^{j}\right)\Bigg|_{u=1}&=k!E^{k+1}_{\alpha,k\alpha+1}\left(t^{\alpha}\sum_{j=0}^{\infty}(\lambda_{j-1}-\lambda_{j})u^{j}\right)\Bigg|_{u=1}\nonumber\\
			&=\frac{k!}{\Gamma(k\alpha+1)}.
		\end{align}
		Now, by using the following result (see Johnson (2002), Eq. (3.6))
		\begin{equation}\label{qlkju76}
		\frac{\mathrm{d}^{r}}{\mathrm{d}w^{^{r}}}(f(w))^{k}=\underset{m_j\in\mathbb{N}_0}{\underset{m_{1}+m_{2}+\dots+m_{k}=r}{\sum}}\frac{r!}{m_1!m_2!\ldots m_k!}f^{(m_{1})}(w)f^{(m_{2})}(w)\dots f^{(m_{k})}(w),
		\end{equation}
		we get		
		\begin{align}\label{ccc}
			\frac{\mathrm{d}^{r}}{\mathrm{d}u^{^{r}}}\left(\sum_{j=0}^{\infty}(\lambda_{j-1}-\lambda_{j})u^{j}\right)^{k}\Bigg|_{u=1}&=r!\underset{m_j\in\mathbb{N}_0}{\underset{\sum_{j=1}^km_j=r}{\sum}}\prod_{\ell=1}^{k}\frac{1}{m_\ell!}\frac{\mathrm{d}^{m_{\ell}}}{\mathrm{d}u^{{m_{\ell}}}}\left(\sum_{j=0}^{\infty}(\lambda_{j-1}-\lambda_{j})u^{j}\right)\Bigg|_{u=1}\nonumber\\
			&=r!\underset{m_j\in\mathbb{N}_0}{\underset{\sum_{j=1}^km_j=r}{\sum}}\prod_{\ell=1}^{k}\frac{1}{m_\ell!}\sum_{j=0}^{\infty}(j)_{m_\ell}(\lambda_{j-1}-\lambda_{j}).
		\end{align}	
		Note that the expression in right hand side of (\ref{ccc}) vanishes for $k=0$. Finally, on substituting (\ref{mkgtrr4543t}), (\ref{ppp}) and (\ref{ccc}) in (\ref{ttt}), we get the required result.	
\end{proof}
Next, we obtain the moment generating function (mgf) of CFPP on non-positive support.
\begin{proposition}
	The mgf $m^\alpha_c(w,t)=\mathbb{E}(e^{-w\mathcal{N}^{\alpha}_{c}(t)})$, $w\geq0$, of CFPP is given by
	\begin{equation}\label{mjhgss}
	m^\alpha_c(w,t)=E_{\alpha,1}\left(t^{\alpha}\sum_{j=0}^{\infty}(\lambda_{j-1}-\lambda_{j})e^{-wj}\right).
	\end{equation}
\end{proposition}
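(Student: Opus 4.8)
The plan is to read the moment generating function directly off the probability generating function established in \eqref{pgfa}. Indeed, by definition $G^{\alpha}_{c}(u,t)=\mathbb{E}(u^{\mathcal{N}^{\alpha}_{c}(t)})$, so for $w\ge0$ one has $m^\alpha_c(w,t)=\mathbb{E}(e^{-w\mathcal{N}^{\alpha}_{c}(t)})=\mathbb{E}((e^{-w})^{\mathcal{N}^{\alpha}_{c}(t)})=G^{\alpha}_{c}(e^{-w},t)$. The only thing to check before invoking \eqref{pgfa} is that the substitution $u=e^{-w}$ lands in the admissible range: since $w\ge0$ we have $e^{-w}\in(0,1]$, hence $|u|\le1$, which is exactly the hypothesis under which \eqref{pgfa} was proved.

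Carrying out the substitution $u=e^{-w}$ in \eqref{pgfa} then gives
\begin{equation*}
m^\alpha_c(w,t)=G^{\alpha}_{c}(e^{-w},t)=E_{\alpha,1}\bigl(\textstyle\sum_{j=0}^{\infty}(e^{-w})^{j}(\lambda_{j-1}-\lambda_{j})t^{\alpha}\bigr)=E_{\alpha,1}\bigl(t^{\alpha}\textstyle\sum_{j=0}^{\infty}(\lambda_{j-1}-\lambda_{j})e^{-wj}\bigr),
\end{equation*}
which is precisely \eqref{mjhgss}.

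There is essentially no obstacle here beyond the range check just mentioned: the identity $m^\alpha_c(w,t)=G^{\alpha}_{c}(e^{-w},t)$ is definitional, and the restriction to $w\ge0$ is what makes both the expectation and the series $t^{\alpha}\sum_{j\ge0}(\lambda_{j-1}-\lambda_{j})e^{-wj}$ well behaved (for $w<0$ the latter need not converge). As a consistency check one can note that at $w=0$ the right-hand side collapses to $E_{\alpha,1}(t^{\alpha}\sum_{j\ge0}(\lambda_{j-1}-\lambda_{j}))=E_{\alpha,1}(0)=1$ by \eqref{asdesa11}, matching $m^\alpha_c(0,t)=\mathbb{E}(1)=1$. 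If a self-contained derivation were preferred, one could instead mimic the Laplace-transform computation used to prove the pgf — writing $\tilde{m}^\alpha_c(w,s)=\int_0^\infty e^{-st}\sum_{n}e^{-wn}p^\alpha_c(n,t)\,\mathrm{d}t$, applying \eqref{mi} termwise, resumming via \eqref{fm2}, and inverting with \eqref{mi} again — but this merely re-establishes \eqref{pgfa} at $u=e^{-w}$ and is not needed.
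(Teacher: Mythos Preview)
Your argument is correct and is the natural way to obtain the mgf once the pgf \eqref{pgfa} is in hand: the identity $m^\alpha_c(w,t)=G^{\alpha}_{c}(e^{-w},t)$ for $w\ge0$ is immediate, and the range check $|e^{-w}|\le1$ is exactly what is needed to invoke \eqref{pgfa}.

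The paper takes a different route. Rather than appealing to the pgf, it starts from the explicit state probabilities \eqref{dssdew1}, writes $m^\alpha_c(w,t)=\sum_{n\ge0}e^{-wn}p^\alpha_c(n,t)$, interchanges the $n$- and $k$-sums, applies the Bell-polynomial identity \eqref{fm2} to collapse the inner sum to $\bigl(\sum_{j\ge1}(\lambda_{j-1}-\lambda_{j})e^{-wj}\bigr)^{k}$, and then invokes the Mittag--Leffler summation formula \eqref{formula} to recognize the resulting series as $E_{\alpha,1}$. In effect this repeats, line for line, the computation already carried out for the pgf, with $e^{-w}$ in place of $u$. Your approach is more economical and makes the logical dependence transparent; the paper's approach is self-contained but redundant given that \eqref{pgfa} has already been established. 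Your closing remark about a Laplace-transform rederivation is slightly off target: the paper's mgf proof does not pass through Laplace transforms but works directly in the time domain via \eqref{fm2} and \eqref{formula}.
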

\begin{proof} Using (\ref{dssdew1}), we have
	\begin{align*}
	m^\alpha_c(w,t)&=p^\alpha_{c}(0,t)+\sum_{n=1}^{\infty}e^{-wn}p^\alpha_{c}(n,t)\\
	&=E_{\alpha,1}(-\lambda_{0}t^{\alpha})+\sum_{n=1}^{\infty}e^{-wn}\sum_{k=1}^{n}\sum_{\Lambda_{n}^{k}}k!\prod_{j=1}^{n-k+1}\frac{(\lambda_{j-1}-\lambda_{j})^{k_{j}}}{k_{j}!} t^{k\alpha}E_{\alpha,k\alpha+1}^{k+1}(-\lambda_{0}t^{\alpha})\\
	&=E_{\alpha,1}(-\lambda_{0}t^{\alpha})+\sum_{k=1}^{\infty}t^{k\alpha}E_{\alpha,k\alpha+1}^{k+1}(-\lambda_{0}t^{\alpha})\sum_{n=k}^{\infty}\sum_{\Lambda_{n}^{k}}k!\prod_{j=1}^{n-k+1}\frac{(\lambda_{j-1}-\lambda_{j})^{k_{j}}}{k_{j}!}e^{-wn}\\
	&=E_{\alpha,1}(-\lambda_{0}t^{\alpha})+\sum_{k=1}^{\infty}\left(\sum_{j=1}^{\infty}(\lambda_{j-1}-\lambda_{j})e^{-wj}\right)^{k} t^{k\alpha}E_{\alpha,k\alpha+1}^{k+1}(-\lambda_{0}t^{\alpha}),\ \ \text{(using\ (\ref{fm2}))}\\
	&=\sum_{k=0}^{\infty}\left(t^{\alpha}\sum_{j=1}^{\infty}(\lambda_{j-1}-\lambda_{j})e^{-wj}\right)^{k}E_{\alpha,k\alpha+1}^{k+1}(-\lambda_{0}t^{\alpha})\\
	&=E_{\alpha,1}\left(t^{\alpha}\sum_{j=0}^{\infty}(\lambda_{j-1}-\lambda_{j})e^{-wj}\right),
	\end{align*}
	where we have used (\ref{formula}) in the last step.
\end{proof}
The mgf of CFPP solves the following fractional differential equation:
	\begin{equation*}
	\partial^{\alpha}_{t}m^{\alpha}_c(w,t)=m^{\alpha}_c(w,t)\sum_{j=0}^{\infty}e^{-wj}(\lambda_{j-1}-\lambda_{j}),\ \ m^{\alpha}_c(w,0)=1. 
	\end{equation*}
The above equation can be solved by using the Laplace transform method to obtain the mgf (\ref{mjhgss}). The proof follows similar lines to that of the related result for the pgf of CFPP.
	
The mean and variance of the CFPP can also be obtained from its mgf. From (\ref{mjhgss}), we have
\begin{equation*}
	m^\alpha_c(w,t)=\sum_{k=0}^{\infty}\frac{t^{k\alpha}}{\Gamma(k\alpha+1)}\left(\sum_{j=0}^{\infty}(\lambda_{j-1}-\lambda_{j}) e^{-wj}\right)^{k}.
\end{equation*}
On taking the derivatives, we get
\begin{equation*}
	\frac{\partial m^\alpha_c(w,t)}{\partial w} =\sum_{k=1}^{\infty}\frac{kt^{k\alpha}}{\Gamma(k\alpha+1)}\left(\sum_{j=0}^{\infty}(\lambda_{j-1}-\lambda_{j}) e^{-wj}\right)^{k-1}\left(\sum_{j=1}^{\infty}(\lambda_{j-1}-\lambda_{j})(-j)e^{-wj}\right),
\end{equation*}
and 
\begin{align*}
	\frac{\partial^{2}m^\alpha_c(w,t)}{\partial w^{2}} &=\sum_{k=2}^{\infty}\frac{k(k-1)t^{k\alpha}}{\Gamma(k\alpha+1)}\left(\sum_{j=0}^{\infty}(\lambda_{j-1}-\lambda_{j}) e^{-wj}\right)^{^{k-2}}\left(\sum_{j=1}^{\infty}(\lambda_{j-1}-\lambda_{j})j e^{-wj}\right)^{2}\\
	&\ \ \ \ +\sum_{k=1}^{\infty}\frac{kt^{k\alpha}}{\Gamma(k\alpha+1)}\left(\sum_{j=0}^{\infty}(\lambda_{j-1}-\lambda_{j}) e^{-wj}\right)^{k-1}\left(\sum_{j=1}^{\infty}(\lambda_{j-1}-\lambda_{j})j^{2} e^{-wj}\right).
\end{align*}
Now, the mean of CFPP is given by
\begin{align*}
	\mathbb{E}\left(\mathcal{N}^{\alpha}_{c}(t)\right)&=-\frac{\partial m^\alpha_c(w,t)}{\partial w}\bigg|_{w=0}\\
	&=\sum_{k=1}^{\infty}\frac{kt^{k\alpha}}{\Gamma(k\alpha+1)}\left(\sum_{j=0}^{\infty}(\lambda_{j-1}-\lambda_{j})\right)^{k-1}\left(\sum_{j=1}^{\infty}(\lambda_{j-1}-\lambda_{j})j\right)\\
	&=\frac{t^{\alpha}}{\Gamma(\alpha+1)}\sum_{j=0}^{\infty}\lambda_{j},
\end{align*}
which agrees with (\ref{wswee11}). Its second order moment is given by
\begin{align*}
	\mathbb{E}\left((\mathcal{N}^{\alpha}_{c}(t))^2\right)&=\frac{\partial^{2}m^\alpha_c(w,t)}{\partial w^{2}}\bigg|_{w=0}\\
	&=\sum_{k=2}^{\infty}\frac{k(k-1)t^{k\alpha}}{\Gamma(k\alpha+1)}\left(\sum_{j=0}^{\infty}(\lambda_{j-1}-\lambda_{j})\right)^{^{k-2}}\left(\sum_{j=1}^{\infty}(\lambda_{j-1}-\lambda_{j})j \right)^{2}\\
	&\ \ \ \ +\sum_{k=1}^{\infty}\frac{kt^{k\alpha}}{\Gamma(k\alpha+1)}\left(\sum_{j=0}^{\infty}(\lambda_{j-1}-\lambda_{j})\right)^{k-1}\left(\sum_{j=1}^{\infty}(\lambda_{j-1}-\lambda_{j})(j(j-1)+j)\right)\\
	&=\frac{2t^{2\alpha}}{\Gamma(2\alpha+1)}\left(\sum_{j=0}^{\infty}\lambda_{j}\right)^{2}+\frac{t^{\alpha}}{\Gamma(\alpha+1)}\left(\sum_{j=0}^{\infty}\lambda_{j}+2\sum_{j=1}^{\infty}j\lambda_{j}\right).
\end{align*}
The variance (\ref{var}) can now be obtained by computing $\mathbb{E}\left((\mathcal{N}^{\alpha}_{c}(t))^2\right)-\left(\mathbb{E}\left(\mathcal{N}^{\alpha}_{c}(t)\right)\right)^{2}$.
\begin{proposition}
The $r$th moment $\mu^\alpha_c(r,t)=\mathbb{E}\left((\mathcal{N}^{\alpha}_{c}(t))^r\right)$, $r\ge1$, of CFPP is given by
\begin{equation*}
\mu^\alpha_c(r,t)=r!\sum_{k=1}^{r}\frac{t^{k\alpha}}{\Gamma(k\alpha+1)}\underset{m_j\in\mathbb{N}_0}{\underset{\sum_{j=1}^km_j=r}{\sum}}\prod_{\ell=1}^{k}\left(\frac{1}{m_\ell!}\sum_{j=0}^{\infty}j^{m_\ell}(\lambda_{j-1}-\lambda_{j})\right).
\end{equation*}
\end{proposition}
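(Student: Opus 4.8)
The plan is to mirror the derivation already carried out for the $r$th factorial moment $\psi^\alpha_c(r,t)$, replacing the falling factorial $(j)_{m_\ell}$ by the ordinary power $j^{m_\ell}$. Recall that the mgf is $m^\alpha_c(w,t) = E_{\alpha,1}\bigl(t^\alpha \sum_{j=0}^\infty (\lambda_{j-1}-\lambda_j) e^{-wj}\bigr)$ by \eqref{mjhgss}, and that $\mu^\alpha_c(r,t) = (-1)^r \partial^r_w m^\alpha_c(w,t)\big|_{w=0}$. So the whole computation reduces to differentiating a composition $E_{\alpha,1}\circ g$, where $g(w) = t^\alpha \sum_{j=0}^\infty (\lambda_{j-1}-\lambda_j) e^{-wj}$, exactly $r$ times in $w$ and then setting $w=0$.

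First I would apply the Fa\`a di Bruno type formula for the $r$th derivative of a composition (see Johnson (2002), Eq. (3.3)), just as in \eqref{ttt}, to write
\begin{equation*}
\partial^r_w m^\alpha_c(w,t) = \sum_{k=0}^{r} \frac{1}{k!} E^{(k)}_{\alpha,1}(g(w))\, A_{r,k}(g(w)),
\end{equation*}
where $A_{r,k}$ is the same auxiliary quantity appearing in the factorial-moment proof. Next I would evaluate at $w=0$. Since $g(0) = t^\alpha \sum_{j=0}^\infty(\lambda_{j-1}-\lambda_j) = 0$ by \eqref{asdesa11}, the two simplifications used there carry over verbatim: by \eqref{re} we get $E^{(k)}_{\alpha,1}(g(0)) = k!\,E^{k+1}_{\alpha,k\alpha+1}(0) = k!/\Gamma(k\alpha+1)$, and the binomial expansion of $A_{r,k}(g(w))\big|_{w=0}$ collapses (all terms with a positive power of $g(0)$ vanish) to $A_{r,k}(g(w))\big|_{w=0} = t^{k\alpha}\, \frac{\mathrm{d}^r}{\mathrm{d}w^r}\bigl(\sum_{j=0}^\infty(\lambda_{j-1}-\lambda_j)e^{-wj}\bigr)^k\big|_{w=0}$. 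In particular the $k=0$ term drops out, so the sum effectively runs over $1\le k\le r$.

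Then I would apply the power-of-a-series derivative formula \eqref{qlkju76} with $f(w) = \sum_{j=0}^\infty(\lambda_{j-1}-\lambda_j)e^{-wj}$ to obtain
\begin{equation*}
\frac{\mathrm{d}^r}{\mathrm{d}w^r}\Bigl(\sum_{j=0}^\infty(\lambda_{j-1}-\lambda_j)e^{-wj}\Bigr)^k\Bigg|_{w=0} = r!\underset{m_j\in\mathbb{N}_0}{\underset{\sum_{j=1}^k m_j = r}{\sum}}\prod_{\ell=1}^{k}\frac{1}{m_\ell!}\,f^{(m_\ell)}(0),
\end{equation*}
and since $f^{(m_\ell)}(w) = \sum_{j=0}^\infty(\lambda_{j-1}-\lambda_j)(-j)^{m_\ell}e^{-wj}$, we have $f^{(m_\ell)}(0) = (-1)^{m_\ell}\sum_{j=0}^\infty j^{m_\ell}(\lambda_{j-1}-\lambda_j)$. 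The overall sign is $\prod_\ell (-1)^{m_\ell} = (-1)^{\sum_\ell m_\ell} = (-1)^r$, which cancels against the $(-1)^r$ in $\mu^\alpha_c(r,t) = (-1)^r\partial^r_w m^\alpha_c(w,t)\big|_{w=0}$. Collecting the pieces — $1/k!$ from Fa\`a di Bruno, $k!/\Gamma(k\alpha+1)$ from $E^{(k)}_{\alpha,1}$, $t^{k\alpha}$ from $A_{r,k}$, and the $r!$-weighted sum over compositions — gives exactly the claimed formula.

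The only point requiring a little care is justifying term-by-term differentiation of the infinite series in $g$ (equivalently, interchanging $\partial_w$ with $\sum_j$), which is licensed because $\{\lambda_j\}\in\ell^1$ and the factors $e^{-wj}$ together with their $w$-derivatives $(-j)^m e^{-wj}$ are, for each fixed $w>0$ in a neighbourhood of any point and in the limit $w\downarrow 0$, dominated by a summable sequence thanks to the geometric-type decay $\lim_{j\to\infty}\lambda_{j+1}/\lambda_j < 1$; this same decay already underlies the convergence of $\sum_j j^{m}\lambda_j$ appearing in the final expression. I expect this uniform-convergence bookkeeping — rather than any step of the algebra, which is routine once \eqref{mjhgss}, \eqref{re} and \eqref{qlkju76} are in hand — to be the main (and only mild) obstacle.
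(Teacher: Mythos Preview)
Your proposal is correct and follows essentially the same route as the paper: start from $\mu^\alpha_c(r,t)=(-1)^r\partial_w^r m^\alpha_c(w,t)\big|_{w=0}$ with the mgf \eqref{mjhgss}, apply the Fa\`a di Bruno formula (Johnson (2002), Eq.~(3.3)), use $g(0)=0$ via \eqref{asdesa11} to collapse the auxiliary quantity (the paper calls it $B_{r,k}$ rather than $A_{r,k}$), evaluate $E^{(k)}_{\alpha,1}(0)$ by \eqref{re}, expand the $k$th power derivative by \eqref{qlkju76}, and cancel the $(-1)^r$. The only addition in your write-up is the uniform-convergence justification for differentiating under the sum, which the paper omits.
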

\begin{proof}
Taking the $r$th derivative of composition of two functions (see Johnson (2002), Eq. (3.3)), we get
\begin{align}\label{tt}
\mu^\alpha_c(r,t)&=(-1)^{r}\frac{\partial^{r} m^\alpha_c(w,t)}{\partial w^{r}}\bigg|_{w=0}\nonumber\\
&=\sum_{k=0}^{r}\frac{(-1)^{r}}{k!}E^{(k)}_{\alpha,1}\left(t^{\alpha}\sum_{j=0}^{\infty}(\lambda_{j-1}-\lambda_{j})e^{-wj}\right)	\left.B_{r,k}\left(t^{\alpha}\sum_{j=0}^{\infty}(\lambda_{j-1}-\lambda_{j})e^{-wj}\right)\right|_{w=0},
\end{align}
where 
\begin{align}\label{mkgtrr4543}
B_{r,k}&\left(t^{\alpha}\sum_{j=0}^{\infty}(\lambda_{j-1}-\lambda_{j})e^{-wj}\right)\Bigg|_{w=0}\nonumber\\
&=\sum_{m=0}^{k}\frac{k!}{m!(k-m)!}\left(-t^{\alpha}\sum_{j=0}^{\infty}(\lambda_{j-1}-\lambda_{j})e^{-wj}\right)^{k-m}\frac{\mathrm{d}^{r}}{\mathrm{d}w^{^{r}}}\left(t^{\alpha}\sum_{j=0}^{\infty}(\lambda_{j-1}-\lambda_{j})e^{-wj}\right)^{m}\Bigg|_{w=0}\nonumber\\
&=t^{k\alpha }\frac{\mathrm{d}^{r}}{\mathrm{d}w^{^{r}}}\left(\sum_{j=0}^{\infty}(\lambda_{j-1}-\lambda_{j})e^{-wj}\right)^{k}\Bigg|_{w=0}.
\end{align}
The last equality follows by using (\ref{asdesa11}). Now, from (\ref{re}), we get
		\begin{align}\label{pp}
			E^{(k)}_{\alpha,1}\left(t^{\alpha}\sum_{j=0}^{\infty}(\lambda_{j-1}-\lambda_{j})e^{-wj}\right)\Bigg|_{w=0}&=k!E^{k+1}_{\alpha,k\alpha+1}\left(t^{\alpha}\sum_{j=0}^{\infty}(\lambda_{j-1}-\lambda_{j})e^{-wj}\right)\Bigg|_{w=0}\nonumber\\
			&=\frac{k!}{\Gamma(k\alpha+1)}.
		\end{align}		
Using (\ref{qlkju76}), we have
		\begin{align}\label{cc}
			\frac{\mathrm{d}^{r}}{\mathrm{d}w^{^{r}}}\left(\sum_{j=0}^{\infty}(\lambda_{j-1}-\lambda_{j})e^{-wj}\right)^{k}\Bigg|_{w=0}&=r!\underset{m_j\in\mathbb{N}_0}{\underset{\sum_{j=1}^km_j=r}{\sum}}\prod_{\ell=1}^{k}\frac{1}{m_\ell!}\frac{\mathrm{d}^{m_{\ell}}}{\mathrm{d}w^{{m_{\ell}}}}\left(\sum_{j=0}^{\infty}(\lambda_{j-1}-\lambda_{j})e^{-wj}\right)\Bigg|_{w=0}\nonumber\\
			&=(-1)^rr!\underset{m_j\in\mathbb{N}_0}{\underset{\sum_{j=1}^km_j=r}{\sum}}\prod_{\ell=1}^{k}\frac{1}{m_\ell!}\sum_{j=0}^{\infty}j^{m_\ell}(\lambda_{j-1}-\lambda_{j}).
		\end{align}	
The result follows on substituting (\ref{mkgtrr4543})-(\ref{cc}) in (\ref{tt}).	
\end{proof}
In the following result it is shown that the CFPP is equal in distribution to a compound fractional Poisson process. Thus, it is neither Markovian nor a L\'evy process.
\begin{theorem}
	Let $\{N^\alpha(t)\}_{t\ge0}$, $0<\alpha\le1$, be the TFPP with intensity parameter $\lambda_{0}>0$ and $\{X_{i}\}_{i\ge1}$ be a sequence of independent and identically distributed (iid) random variables with the following distribution:
	\begin{equation}\label{ngftt4}
	\mathrm{Pr}\{X_{i}=j\}=\frac{\lambda_{j-1}-\lambda_{j}}{\lambda_{0}}, \ \ j\ge1.
	\end{equation}
	Then,
	\begin{equation}\label{cd}
	\mathcal{N}^{\alpha}_{c}(t)\overset{d}{=}\sum_{i=1}^{N^\alpha(t)}X_{i}, \ \ \ t\ge0,
	\end{equation}
	where $\{X_{i}\}_{i\ge1}$ is independent of $\{N^\alpha(t)\}_{t\ge0}$.
\end{theorem}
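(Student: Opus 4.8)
The plan is to prove the identity at the level of probability generating functions. Both sides of (\ref{cd}) take values in $\mathbb{N}_0$, and the pgf of an $\mathbb{N}_0$-valued random variable determines its distribution, so it suffices to check that the pgf of $\sum_{i=1}^{N^\alpha(t)}X_i$ coincides, for $|u|\le1$, with the pgf $G^\alpha_c(u,t)$ of $\mathcal{N}^\alpha_c(t)$ computed in (\ref{pgfa}). Before doing that I would record that (\ref{ngftt4}) really is a probability mass function on $\{1,2,\dots\}$: each term is non-negative because $\{\lambda_j\}$ is non-increasing and $\lambda_0>0$, and $\sum_{j\ge1}(\lambda_{j-1}-\lambda_j)=\lim_{M\to\infty}(\lambda_0-\lambda_M)=\lambda_0$ since $\sum_j\lambda_j<\infty$ forces $\lambda_j\to0$ (the same telescoping that underlies (\ref{asdesa11})). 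In particular $X_1$ is finite almost surely and the random sum $\sum_{i=1}^{N^\alpha(t)}X_i$ is well defined.

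Next, conditioning on $N^\alpha(t)$ and using the stated independence of $\{X_i\}_{i\ge1}$ and $\{N^\alpha(t)\}_{t\ge0}$,
\[
\mathbb{E}\Big(u^{\sum_{i=1}^{N^\alpha(t)}X_i}\Big)=\mathbb{E}\big((\varphi(u))^{N^\alpha(t)}\big),\qquad \varphi(u):=\mathbb{E}(u^{X_1})=\frac{1}{\lambda_0}\sum_{j=1}^{\infty}u^{j}(\lambda_{j-1}-\lambda_j),
\]
the interchange of sum and expectation being a routine dominated-convergence step. I would then invoke the (classical) pgf of the TFPP with intensity $\lambda_0$, namely $\mathbb{E}(v^{N^\alpha(t)})=E_{\alpha,1}\big(-\lambda_0 t^{\alpha}(1-v)\big)$ (see Beghin and Orsingher (2009)), which also drops out of (\ref{pgfa}) on setting $\lambda_n=0$ for all $n\ge1$. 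Substituting $v=\varphi(u)$ and rewriting the argument via
\[
-\lambda_0\big(1-\varphi(u)\big)=-\lambda_0+\sum_{j=1}^{\infty}u^{j}(\lambda_{j-1}-\lambda_j)=\sum_{j=0}^{\infty}u^{j}(\lambda_{j-1}-\lambda_j),
\]
where the $j=0$ summand on the right is $u^{0}(\lambda_{-1}-\lambda_0)=-\lambda_0$, one arrives at
\[
\mathbb{E}\Big(u^{\sum_{i=1}^{N^\alpha(t)}X_i}\Big)=E_{\alpha,1}\Big(t^{\alpha}\sum_{j=0}^{\infty}u^{j}(\lambda_{j-1}-\lambda_j)\Big)=G^\alpha_c(u,t),
\]
the last equality being (\ref{pgfa}). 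Matching pgfs gives (\ref{cd}).

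I do not expect a genuine obstacle here; the only care points are checking that (\ref{ngftt4}) is a legitimate pmf and justifying the term-by-term manipulations, both of which rest on $\sum_j\lambda_j<\infty$. A more pedestrian route would avoid the TFPP pgf altogether: write $\mathrm{Pr}\{\sum_{i=1}^{N^\alpha(t)}X_i=n\}=\sum_{k\ge0}\mathrm{Pr}\{N^\alpha(t)=k\}\,\mathrm{Pr}\{X_1+\cdots+X_k=n\}$ and verify directly that these probabilities solve the system (\ref{model}) with the correct initial conditions; this works but is longer than the generating-function computation. Finally, since the CFPP has so far been pinned down only through its one-dimensional state probabilities, (\ref{cd}) lets us take the compound process on its right-hand side as a concrete realization of $\{\mathcal{N}^\alpha_c(t)\}_{t\ge0}$; because that process is a compound Poisson process time-changed by the inverse stable subordinator (recall (\ref{1.1df})), it is neither Markovian nor a L\'evy process, which is the assertion made after the theorem.
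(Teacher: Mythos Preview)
Your proof is correct and follows essentially the same approach as the paper's: both compute a generating function of the compound sum $\sum_{i=1}^{N^\alpha(t)}X_i$ by conditioning on $N^\alpha(t)$, plug in the known generating function of the TFPP, and match the result to the already-derived generating function of the CFPP. The only cosmetic difference is that you work with the pgf and cite (\ref{pgfa}), while the paper works with the mgf on the non-positive half-line and cites (\ref{mjhgss}); these are the same computation under the substitution $u=e^{-w}$.
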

\begin{proof}
	The mgf of $N^\alpha(t)$, $t\ge0$, is given by (see Laskin (2003), Eq. (35))
	\begin{equation}\label{mgft}
	\mathbb{E}\left(e^{-wN^\alpha(t)}\right)=E_{\alpha,1}(\lambda_{0}t^{\alpha}(e^{-w}-1)),\ \ w\ge0.
	\end{equation}
	Also, the mgf of $X_{i}$, $i\ge1$, can be obtained as
	\begin{equation}\label{plo876}
	\mathbb{E}\left(e^{-wX_{i}}\right)=\frac{1}{\lambda_{0}}\sum_{j=1}^{\infty}(\lambda_{j-1}-\lambda_{j})e^{-jw}.
	\end{equation}
	Now,
	\begin{align*}
	\mathbb{E}\left(e^{-w\sum_{i=1}^{N^\alpha(t)}X_{i}}\right)&=\mathbb{E}\left(\mathbb{E}\left(e^{-w\sum_{i=1}^{N^\alpha(t)}X_{i}}\big|N^\alpha(t)\right)\right)\\
	&=\mathbb{E}\left(\left(\mathbb{E}\left(e^{-wX_{1}}\right)\right)^{N^\alpha(t)}\right)\\
	&=\mathbb{E}\left(\exp\left(N^\alpha(t)\ln\left(\frac{1}{\lambda_{0}}\sum_{j=1}^{\infty}(\lambda_{j-1}-\lambda_{j})e^{-jw}\right)\right)\right),\ \ (\text{using (\ref{plo876})})\\
	&=E_{\alpha,1}\left(\lambda_{0}t^{\alpha}\left(\exp\left(\ln\frac{1}{\lambda_{0}}\sum_{j=1}^{\infty}(\lambda_{j-1}-\lambda_{j})e^{-wj}\right)-1\right)\right),\ \ (\text{using (\ref{mgft})})\\
	&=E_{\alpha,1}\left(t^{\alpha}\sum_{j=0}^{\infty}(\lambda_{j-1}-\lambda_{j})e^{-wj}\right),
	\end{align*}
	which agrees with the mgf of CFPP (\ref{mjhgss}). This completes the proof.
\end{proof}
\begin{remark}
	The one-dimensional distribution of TFPP is given by
	\begin{equation*}
	\mathrm{Pr}\{N^\alpha(t)=n\}=(\lambda_{0}t^{\alpha})^{n}E_{\alpha,n\alpha+1}^{n+1}(-\lambda_{0}t^{\alpha}),\ \ n\ge0.
	\end{equation*}
	For $n=0$, using (\ref{cd}) we have
	\begin{equation*}
	\mathrm{Pr}\{\mathcal{N}^\alpha_c(t)=0\}=\mathrm{Pr}\{N^\alpha(t)=0\}=E_{\alpha,1}(-\lambda_{0}t^{\alpha}).
	\end{equation*}
	As $X_i$'s are independent of $N^\alpha(t)$, for $n\ge1$, we get
	\begin{align}\label{rdee32}
	\mathrm{Pr}\{\mathcal{N}^\alpha_c(t)=n\}&=\sum_{k=1}^{n}\mathrm{Pr}\{X_{1}+X_{2}+\dots+X_{k}=n\}\mathrm{Pr}\{N^\alpha(t)=k\}\\
	&=\sum_{k=1}^{n}\sum_{\Theta_{n}^{k}}k!\prod_{j=1}^{n}\frac{(\lambda_{j-1}-\lambda_{j})^{k_{j}}}{k_{j}!}t^{k\alpha}E_{\alpha,k\alpha+1}^{k+1}(-\lambda_{0}t^{\alpha}),\nonumber	
	\end{align}
where $k_j$ is the total number of claims of $j$ units. The above expression agrees with (\ref{dist}).

As $X_{i}$'s are iid, we have
	\begin{align}\label{sweee32}
	\mathrm{Pr}\{X_{1}+X_{2}+\dots+X_{k}=n\}&=\underset{m_j\in\mathbb{N}}{\underset{m_{1}+m_{2}+\dots+m_{k}=n}{\sum}}\mathrm{Pr}\{X_{1}=m_1,X_{2}=m_2,\ldots,X_{k}=m_k\}\nonumber\\
	&=\underset{m_j\in\mathbb{N}}{\underset{m_{1}+m_{2}+\dots+m_{k}=n}{\sum}}\prod_{j=1}^{k}\mathrm{Pr}\{X_{j}=m_j\}\nonumber\\
	&=\underset{m_j\in\mathbb{N}}{\underset{m_{1}+m_{2}+\dots+m_{k}=n}{\sum}}\frac{1}{\lambda_{0}^{k}}\prod_{j=1}^{k}(\lambda_{m_j-1}-\lambda_{m_j}),	
	\end{align}
where we have used (\ref{ngftt4}). Substituting (\ref{sweee32}) in (\ref{rdee32}), we get an equivalent expression for the one-dimensional distribution of the CFPP as
\begin{equation*}
\mathrm{Pr}\{\mathcal{N}^\alpha_c(t)=n\}=\sum_{k=1}^{n}\underset{m_j\in\mathbb{N}}{\underset{m_{1}+m_{2}+\dots+m_{k}=n}{\sum}}\prod_{j=1}^{k}(\lambda_{m_j-1}-\lambda_{m_j})t^{k\alpha}E_{\alpha,k\alpha+1}^{k+1}(-\lambda_{0}t^{\alpha}).	
\end{equation*}
\end{remark}

\section{Convoluted Poisson process: A Special case of CFPP}\label{Section4}
Here, we discuss a particular case of the CFPP, namely, the convoluted Poisson process (CPP) which we denote by $\{\mathcal{N}_{c}(t)\}_{t\ge 0}$.

The CPP is defined as a stochastic process whose state probabilities satisfy 
\begin{equation*}
	\frac{\mathrm{d}}{\mathrm{d}t}p_{c}(n,t)=-\lambda_{n}*p_{c}(n,t)+\lambda_{n-1}*p_{c}(n-1,t),
\end{equation*}
with $p_{c}(n,0)=\delta_{0}(n)$, $n\ge0$. The conditions on $\lambda_{n}$'s are same as in the case of CFPP.

For $\alpha=1$, the CFPP reduces to CPP. Thus, its the state probabilities are given by
\begin{equation}\label{key1q112}
	p_{c}(n,t)=\begin{cases}
		e^{-\lambda_{0}t},\ \ n=0,\vspace*{.2cm}\\
		\displaystyle\sum_{k=1}^{n}\sum_{\Lambda_{n}^{k}}\prod_{j=1}^{n-k+1}\frac{(\lambda_{j-1}-\lambda_{j})^{k_{j}}}{k_{j}!} t^{k}e^{-\lambda_{0}t},\ \ n\ge1.
	\end{cases}
\end{equation}
On substituting $\lambda_{0}=\lambda$ and $\lambda_{n}=0$ for $n\ge1$ in (\ref{key1q112}), we get
\begin{equation*}
p(n,t)=\frac{(\lambda t)^ne^{-\lambda t}}{n!},\ \  n\ge0,
\end{equation*}
which is the distribution of Poisson process. Thus, Poisson process is a particular case of the CPP.
\begin{remark}
	Let $W_{c}$ be the first waiting time of CPP. Then,
	\begin{equation*}
	\mathrm{Pr}\{W_{c}>t\}=\mathrm{Pr}\{\mathcal{N}_{c}(t)=0\}=e^{-\lambda_{0}t}, \ t>0,
	\end{equation*}
	which coincides with the first waiting time of Poisson process. It is known that the Poisson process is a renewal process. Thus, it implies that the CPP is not a renewal process as the one-dimensional distributions of Poisson process and CPP are different.	
\end{remark}
A direct method to obtain the pgf of CPP is as follows: 
\begin{align}\label{kjhgt21}
G_c(u,t)&=p_{c}(0,t)+\sum_{n=1}^{\infty}u^{n}p_{c}(n,t)\nonumber\\
&=e^{-t\lambda_{0}} +\sum_{n=1}^{\infty}u^{n}\sum_{k=1}^{n}\sum_{\Lambda_{n}^{k}}\prod_{j=1}^{n-k+1}\frac{(\lambda_{j-1}-\lambda_{j})^{k_{j}}}{k_{j}!} t^{k}e^{-t\lambda_{0}}\\
&=e^{-t\lambda_{0}} \left(1+\sum_{n=1}^{\infty}\sum_{k=1}^{n}\sum_{\Lambda_{n}^{k}}\prod_{j=1}^{n-k+1}\frac{(\lambda_{j-1}-\lambda_{j})^{k_{j}}}{k_{j}!}u^{n}t^{k} \right)\nonumber\\
&=e^{-t\lambda_{0}}\exp\left(t\sum_{j=1}^{\infty}u^{j}(\lambda_{j-1}-\lambda_{j})\right),\ \ \text{(using\ (\ref{fm1}))}\nonumber\\
&=\exp\left(t\sum_{j=0}^{\infty}u^{j}(\lambda_{j-1}-\lambda_{j})\right).\nonumber
\end{align}
Also, its mean and variance are given by
\begin{equation}\label{4.3ew}
	\mathbb{E}\left(\mathcal{N}_{c}(t)\right)=t\sum_{j=0}^{\infty}\lambda_{j}
\end{equation}
and
\begin{equation}\label{4.4er}
	\operatorname{ Var}\left(\mathcal{N}_{c}(t)\right)=t\left(\sum_{j=0}^{\infty}\lambda_{j}+2\sum_{j=1}^{\infty}j\lambda_{j}\right),
\end{equation} 
respectively. 
\begin{remark}
The CPP exhibits overdispersion as $\operatorname{ Var}\left(\mathcal{N}_{c}(t)\right)-\mathbb{E}\left(\mathcal{N}_{c}(t)\right)=2t\sum_{j=1}^{\infty}j\lambda_{j}>0$ for $t>0$ provided $\lambda_{n}\neq0$ for all $n\ge1$.
\end{remark}
\begin{theorem}\label{thwe22}
	The CPP is a L\'evy process.
\end{theorem}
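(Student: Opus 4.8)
The plan is to deduce the theorem from the compound Poisson representation $(\ref{cd})$ specialised to $\alpha=1$, together with the classical fact that a compound Poisson process is a L\'evy process.

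First I would observe that setting $\alpha=1$ in $(\ref{cd})$ gives $\mathcal{N}_c(t)\overset{d}{=}Y(t)$ for every $t\ge0$, where $Y(t):=\sum_{i=1}^{N(t)}X_i$, $\{N(t)\}_{t\ge0}$ is the homogeneous Poisson process with rate $\lambda_0$, and $\{X_i\}_{i\ge1}$ is the iid sequence with law $(\ref{ngftt4})$, independent of $\{N(t)\}_{t\ge0}$. The process $\{Y(t)\}_{t\ge0}$ is a compound Poisson process, hence a L\'evy process: $Y(0)=0$; for $0\le s<t$ the increment $Y(t)-Y(s)=\sum_{i=N(s)+1}^{N(t)}X_i$ is, by the independence and stationarity of the increments of $N$ and the independence of the $X_i$'s, independent of the past up to time $s$ and distributed as $Y(t-s)$; and $\mathrm{Pr}\{Y(h)\neq0\}=\mathrm{Pr}\{N(h)\ge1\}=1-e^{-\lambda_0 h}\to0$ as $h\downarrow0$, which with stationarity of the increments yields stochastic continuity.

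It then remains to transfer these properties to the CPP itself. Here I would argue that the CPP, as specified by its forward equations with $\mathcal{N}_c(0)=0$, is the time-homogeneous continuous-time Markov chain on $\mathbb{N}_0$ with spatially homogeneous transition rates $q(n,n+j)=\lambda_{j-1}-\lambda_j$ for $j\ge1$ and $q(n,n)=-\lambda_0$, and that $\{Y(t)\}_{t\ge0}$ is exactly this chain. Since the finite-dimensional distributions of such a chain are determined by its starting state together with its transition probabilities $P_{m,n}(t)=p_c(n-m,t)$, and the latter coincide for the two processes by the marginal identity $(\ref{cd})$, we obtain $\{\mathcal{N}_c(t)\}_{t\ge0}\eqfd\{Y(t)\}_{t\ge0}$. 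Consequently $\{\mathcal{N}_c(t)\}_{t\ge0}$ starts at $0$, has stationary and independent increments, and is stochastically continuous, i.e.\ it is a L\'evy process.

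The step I expect to carry the real content is the last one: passing from equality of one-dimensional marginals in $(\ref{cd})$ to equality of the two processes. This forces one to regard the CPP not merely through its state probabilities but as a genuine Markov process and to exploit the translation invariance of its generator; once this identification is in place, the remaining ingredients (independence and stationarity of increments, stochastic continuity, and $\mathcal{N}_c(0)=0$) are routine. If one prefers to bypass the Markov-chain formalism, an alternative is to verify these three properties for the CPP directly, using the multiplicativity $G_c(u,t_1+t_2)=G_c(u,t_1)G_c(u,t_2)$ of the probability generating function together with a direct check that $\mathcal{N}_c(t+s)-\mathcal{N}_c(s)$ is conditionally independent of the history up to $s$ — but this essentially reconstructs the same argument.
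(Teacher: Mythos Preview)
Your argument is correct, and in fact the paper records exactly this route in the Remark immediately following the theorem: putting $\alpha=1$ in (\ref{cd}) identifies the CPP in distribution with a compound Poisson process, hence a L\'evy process. The paper's \emph{main} proof, however, proceeds differently: it computes the characteristic function of $\mathcal{N}_c(t)$ directly from the state probabilities (\ref{key1q112}) and the Bell-polynomial identity (\ref{fm1}), obtaining $\phi_c(\xi,t)=\exp\big(-t\sum_{j\ge0}(\lambda_{j-1}-\lambda_j)(1-e^{i\xi j})\big)$, and then reads off the L\'evy measure $\Pi=\sum_{j\ge0}(\lambda_{j-1}-\lambda_j)\delta_j$. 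What your approach buys is honesty about the step the paper leaves implicit: knowing only the one-dimensional marginals (whether via the characteristic function or via (\ref{cd})) is not by itself enough to assert the L\'evy property, and your appeal to the spatially homogeneous Markov generator to upgrade marginal equality to equality of finite-dimensional distributions is the correct way to close that gap. What the paper's approach buys is that it yields the L\'evy triplet explicitly without invoking the compound-Poisson representation (\ref{cd}) at all.
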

\begin{proof}
	The characteristic function $\phi_c(\xi,t)$ of CPP is given by
	\begin{align*}
	\phi_c(\xi,t)=\mathbb{E}\left(e^{i\xi\mathcal{N}_{c}(t)}\right)&=\sum_{n=0}^{\infty}e^{i\xi n}p_{c}(n,t),\ \ \xi\in\mathbb{R}\\
	&=e^{-t\lambda_{0}}\left(1 +\sum_{n=1}^{\infty}e^{i\xi n}\sum_{k=1}^{n}\sum_{\Lambda_{n}^{k}}\prod_{j=1}^{n-k+1}\frac{(\lambda_{j-1}-\lambda_{j})^{k_{j}}}{k_{j}!} t^{k}\right)\\
	&=e^{-t\lambda_{0}}\exp\left(t\sum_{j=1}^{\infty}e^{i\xi j}(\lambda_{j-1}-\lambda_{j})\right),\ \ \text{(using\ (\ref{fm1}))}\\
	&=\exp\left(-t\sum_{j=0}^{\infty}e^{i\xi j}(\lambda_{j}-\lambda_{j-1})\right).		
	\end{align*}
So, its characteristic exponent is  
	\begin{equation*}
	\psi(\xi)=\sum_{j=0}^{\infty}(\lambda_{j}-\lambda_{j-1})e^{i\xi j}
	=\sum_{j=0}^{\infty}(\lambda_{j-1}-\lambda_{j})(1-e^{i\xi j}).
	\end{equation*}
Thus, the CPP is a l\'evy process with l\'evy measure $\Pi(\mathrm{d}x)=\sum_{j=0}^{\infty}(\lambda_{j-1}-\lambda_{j})\delta_{j}$ where $\delta_{j}$'s are Dirac measures.
\end{proof}
\begin{remark}
Substituting $\alpha=1$ in (\ref{cd}), we get 
\begin{equation*}
\mathcal{N}_{c}(t)\overset{d}{=}\sum_{i=1}^{N(t)}X_{i}, \ \ \ t\ge0,
\end{equation*}
where $\{X_{i}\}_{i\ge1}$ is independent of the Poisson process $\{N(t)\}_{t\ge0}$, {\it i.e.}, the CPP is a compound Poisson process. Hence, it's a L\'evy process.  
\end{remark}
Let $\{T_{2\alpha}(t)\}_{t>0}$ be a random process whose distribution is given by the folded solution of following fractional diffusion equation (see Orsingher and Beghin (2004)): 
\begin{equation}\label{diff}
\partial^{2\alpha}_{t}u(x,t)=\frac{\partial^{2}}{\partial x^{2}}u(x,t),\ \ x\in\mathbb{R},\ t>0,
\end{equation}
with $u(x,0)=\delta(x)$ for $0<\alpha\le 1$ and $\frac{\partial^{}}{\partial t}u(x,0)=0$ for $1/2<\alpha\le1$.

The Laplace transform of the folded solution $f_{T_{2\alpha}}(x,t)$ of (\ref{diff}) is given by (see Orsingher and Polito (2010), Eq. (2.29))
\begin{equation}\label{lta}
\int_{0}^{\infty}e^{-st}f_{T_{2\alpha}}(x,t)\mathrm{d}t=s^{\alpha-1}e^{-x s^{\alpha}}, \ \ x>0.
\end{equation}	
The next result establish a time-changed relationship between the CPP and CFPP.
\begin{theorem}
	Let the process $\{T_{2\alpha}(t)\}_{t>0}$, $0<\alpha\le 1$, be independent of the CPP $\{\mathcal{N}_{c}(t)\}_{t>0}$. Then, 
	\begin{equation}\label{sub}
		\mathcal{N}^{\alpha}_{c}(t)\overset{d}{=}\mathcal{N}_{c}(T_{2\alpha}(t)).
	\end{equation}
\end{theorem}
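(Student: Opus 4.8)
The plan is to identify the one-dimensional law of the time-changed process $\mathcal{N}_{c}(T_{2\alpha}(t))$ with that of the CFPP by comparing probability generating functions, working through the Laplace transform in the time variable. Concretely, I would compute the Laplace transform (in $t$) of $u\mapsto\mathbb{E}(u^{\mathcal{N}_{c}(T_{2\alpha}(t))})$ and check that it coincides with $\tilde{G}^{\alpha}_{c}(u,s)=s^{\alpha-1}(s^{\alpha}-\sum_{j=0}^{\infty}u^{j}(\lambda_{j-1}-\lambda_{j}))^{-1}$ obtained in (\ref{pgghg1}) (equivalently (\ref{ltpgf})). Since the pgf determines the distribution and the Laplace transform is injective, (\ref{sub}) then follows.

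First I would condition on $T_{2\alpha}(t)$ and use the independence hypothesis together with the explicit pgf $G_{c}(u,x)=\exp(x\sum_{j=0}^{\infty}u^{j}(\lambda_{j-1}-\lambda_{j}))$ of the CPP derived in (\ref{kjhgt21}), so that $\mathbb{E}(u^{\mathcal{N}_{c}(T_{2\alpha}(t))})=\int_{0}^{\infty}G_{c}(u,x)\,f_{T_{2\alpha}}(x,t)\,\mathrm{d}x$. Next I would take the Laplace transform with respect to $t$, interchange the order of the two integrations, and apply (\ref{lta}) to the inner $t$-integral; this converts the expression into $s^{\alpha-1}\int_{0}^{\infty}\exp(-x(s^{\alpha}-\sum_{j=0}^{\infty}u^{j}(\lambda_{j-1}-\lambda_{j})))\,\mathrm{d}x$, which integrates in closed form to $s^{\alpha-1}(s^{\alpha}-\sum_{j=0}^{\infty}u^{j}(\lambda_{j-1}-\lambda_{j}))^{-1}$, i.e.\ exactly (\ref{pgghg1}). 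Uniqueness of the Laplace transform then gives $\mathbb{E}(u^{\mathcal{N}_{c}(T_{2\alpha}(t))})=G^{\alpha}_{c}(u,t)$ for all $|u|\le1$, and the claim follows.

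The step requiring genuine care is the interchange of the integrations, hand in hand with the region of convergence of the $x$-integral. The key observation is that, since $\{\lambda_{j}\}$ is non-increasing with $\lambda_{j}=0$ for $j<0$, one has $\lambda_{j-1}-\lambda_{j}\ge0$ for $j\ge1$ while the $j=0$ term equals $-\lambda_{0}$, and $\sum_{j=0}^{\infty}(\lambda_{j-1}-\lambda_{j})=0$ by (\ref{asdesa11}); hence for $|u|\le1$ the real part of $\sum_{j=0}^{\infty}u^{j}(\lambda_{j-1}-\lambda_{j})$ is at most $-\lambda_{0}+\sum_{j=1}^{\infty}(\lambda_{j-1}-\lambda_{j})=0$, so $\mathrm{Re}(s^{\alpha}-\sum_{j=0}^{\infty}u^{j}(\lambda_{j-1}-\lambda_{j}))\ge s^{\alpha}>0$ for $s>0$. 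Consequently $|G_{c}(u,x)|\le1$, the integrand $e^{-st}|G_{c}(u,x)|f_{T_{2\alpha}}(x,t)$ is non-negative with finite iterated integral, so Tonelli/Fubini applies and the $x$-integral converges. I expect this bookkeeping, rather than any deeper idea, to be the only obstacle. As an alternative that bypasses pgf's, one can condition at the level of state probabilities: $\mathrm{Pr}\{\mathcal{N}_{c}(T_{2\alpha}(t))=n\}=\int_{0}^{\infty}p_{c}(n,x)f_{T_{2\alpha}}(x,t)\,\mathrm{d}x$, and taking the Laplace transform in $t$ via (\ref{lta}) reduces the right-hand side to $s^{\alpha-1}\tilde{p}_{c}(n,s^{\alpha})$, where $\tilde{p}_{c}(n,\cdot)$ is the $\alpha=1$ specialization of (\ref{lap}); comparing then recovers $\tilde{p}^{\alpha}_{c}(n,s)$ as in (\ref{lap}), yielding (\ref{sub}) directly at the level of one-dimensional distributions.
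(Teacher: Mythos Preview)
Your proposal is correct and follows essentially the same strategy as the paper: both arguments establish the identity by matching the probability generating functions through their Laplace transforms in $t$, using (\ref{lta}) to pass between $s^{\alpha-1}e^{-xs^{\alpha}}$ and the density $f_{T_{2\alpha}}(x,t)$, and then invoking uniqueness of the Laplace transform. The only cosmetic difference is direction and bookkeeping: the paper starts from the series form (\ref{mjhgqq1}) of $\tilde{G}^{\alpha}_{c}(u,s)$, rewrites each $(s^{\alpha}+\lambda_{0})^{-(k+1)}$ as a $\mu$-integral, and recognizes the resulting series as $G_{c}(u,\mu)$ via (\ref{kjhgt21}), whereas you work in the opposite direction using the exponential closed form of $G_{c}$ and evaluate the $x$-integral directly---your added justification of the Fubini step and the convergence region is a welcome point of rigor that the paper omits.
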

\begin{proof}
From (\ref{mjhgqq1}), we have
	\begin{align}\label{sin}
\tilde{G}^{\alpha}_{c}(u,s)&=\dfrac{s^{\alpha-1}}{s^{\alpha}+\lambda_{0}}+\sum_{n=1}^{\infty}u^{n}\sum_{k=1}^{n}\sum_{\Lambda_{n}^{k}}k!\prod_{j=1}^{n-k+1}\frac{(\lambda_{j-1}-\lambda_{j})^{k_{j}}}{k_{j}!}\frac{s^{\alpha-1}}{(s^{\alpha}+\lambda_{0})^{k+1}},\ \ s>0\nonumber\\
		&=\int_{0}^{\infty}s^{\alpha-1}\left(e^{-\mu(s^{\alpha}+\lambda_{0})} +\sum_{n=1}^{\infty}u^{n}\sum_{k=1}^{n}\sum_{\Lambda_{n}^{k}}\prod_{j=1}^{n-k+1}\frac{(\lambda_{j-1}-\lambda_{j})^{k_{j}}}{k_{j}!} e^{-\mu(s^{\alpha}+\lambda_{0})}\mu^{k}\right)\mathrm{d}\mu \nonumber\\
		&=\int_{0}^{\infty}s^{\alpha-1}e^{-\mu s^{\alpha}}\left(e^{-\mu\lambda_{0}} +\sum_{n=1}^{\infty}u^{n}\sum_{k=1}^{n}\sum_{\Lambda_{n}^{k}}\prod_{j=1}^{n-k+1}\frac{(\lambda_{j-1}-\lambda_{j})^{k_{j}}}{k_{j}!} e^{-\mu\lambda_{0}}\mu^{k}\right)\mathrm{d}\mu \nonumber\\
		&=\int_{0}^{\infty}s^{\alpha-1}e^{-\mu s^{\alpha}}G_c(u,\mu)\mathrm{d}\mu,\ \ \text{(using\ (\ref{kjhgt21}))}\nonumber\\
		&=\int_{0}^{\infty}G_c(u,\mu)\int_{0}^{\infty}e^{-st}f_{T_{2\alpha}}(\mu,t)\mathrm{d}t \mathrm{d}\mu,\ \  \text{(using\ (\ref{lta}))}\nonumber\\
		&=\int_{0}^{\infty}e^{-st}\left(\int_{0}^{\infty}G_c(u,\mu)f_{T_{2\alpha}}(\mu,t)\mathrm{d}\mu\right)\mathrm{d}t \nonumber.
	\end{align}
By uniqueness of Laplace transform, we get
	\begin{equation*}
		G^{\alpha}_{c}(u,t)=\int_{0}^{\infty}G_c(u,\mu)f_{T_{2\alpha}}(\mu,t)\mathrm{d}\mu.
	\end{equation*}	
This completes the proof.
\end{proof}
	\begin{remark}
		For $\alpha=1/2$, the process $\{T_{2\alpha}(t)\}_{t>0}$ is equal in distribution to the reflecting Brownian motion $\{|B(t)|\}_{t>0}$ as the diffusion equation (\ref{diff}) reduces to the heat equation
		\begin{equation*}
			\begin{cases*}
				\frac{\partial}{\partial t}u(x,t)=\frac{\partial^{2}}{\partial x^{2}}u(x,t),\ \ x\in\mathbb{R},\ t>0,\\
				u(x,0)=\delta(x).
			\end{cases*}
		\end{equation*}
		So, $\mathcal{N}^{1/2}_{c}(t)$ coincides with CPP at a Brownian time, that is, $\mathcal{N}_{c}(|B(t)|)$, $t>0$.
	\end{remark}
\begin{remark}
	Let $\{H^{\alpha}(t)\}_{t>0}$, $0<\alpha\le 1$, be an inverse $\alpha$-stable subordinator. The density functions of $H^{\alpha}(t)$ and $T_{2\alpha}(t)$ coincides (see Meerschaert {\it et al.} (2011)). Thus, we have
	\begin{equation}\label{keyyekk}
	\mathcal{N}^{\alpha}_{c}(t)\overset{d}{=}\mathcal{N}_{c}(H^{\alpha}(t)),\ \ t>0,
	\end{equation}	
	where the inverse $\alpha$-stable subordinator is independent of the CPP.
\end{remark}
Let $\{\mathcal{H}^{\alpha}(t)\}_{t>0}$, $0<\alpha\le1$, be a random time process whose density function $f_{\mathcal{H}^{\alpha}(t)}(x,t)$, $x>0$, has the following Mellin transform (see Cahoy and Polito (2012), Eq. (3.12)):
\begin{equation}\label{mhts}
\int_{0}^{\infty}x^{\nu-1}f_{\mathcal{H}^{\alpha}(t)}(x,t)\mathrm{d}x=\frac{\Gamma(\nu)t^{\frac{\nu-1}{\alpha}}}{\Gamma(1-1/\alpha+\nu/\alpha)},\ \ t>0,\ \nu\in\mathbb{R}.
\end{equation}
\begin{proposition}
	Let the process $\{\mathcal{H}^{\alpha}(t)\}_{t>0}$, $0<\alpha\le 1$, be independent of the CFPP $\{\mathcal{N}^\alpha_{c}(t)\}_{t>0}$. Then,
	\begin{equation*}
	\mathcal{N}_{c}(t)\overset{d}{=}\mathcal{N}^{\alpha}_{c}(\mathcal{H}^{\alpha}(t)),\ \ t>0.
	\end{equation*}	
\end{proposition}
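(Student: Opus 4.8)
The plan is to compute the probability generating function of the subordinated process $\mathcal{N}^{\alpha}_{c}(\mathcal{H}^{\alpha}(t))$ by conditioning on $\mathcal{H}^{\alpha}(t)$, to show that it reduces to the pgf $G_{c}(u,t)$ of the CPP obtained in (\ref{kjhgt21}), and then to invoke uniqueness of the generating function to conclude the equality in distribution for each fixed $t>0$.

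First I would use independence and the law of total expectation to write, for $|u|\le 1$,
\begin{equation*}
\mathbb{E}\big(u^{\mathcal{N}^{\alpha}_{c}(\mathcal{H}^{\alpha}(t))}\big)=\int_{0}^{\infty}G^{\alpha}_{c}(u,x)\,f_{\mathcal{H}^{\alpha}(t)}(x,t)\,\mathrm{d}x .
\end{equation*}
Then I would substitute the series representation (\ref{smpgf}) of the CFPP pgf, $G^{\alpha}_{c}(u,x)=\sum_{k=0}^{\infty}\dfrac{x^{k\alpha}}{\Gamma(k\alpha+1)}\big(\sum_{j=0}^{\infty}(\lambda_{j-1}-\lambda_{j})u^{j}\big)^{k}$, and interchange the summation with the integral against $f_{\mathcal{H}^{\alpha}(t)}$. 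After the interchange, the inner integral becomes $\int_{0}^{\infty}x^{k\alpha}f_{\mathcal{H}^{\alpha}(t)}(x,t)\,\mathrm{d}x$, which is exactly the Mellin transform (\ref{mhts}) evaluated at $\nu=k\alpha+1$; it equals $\Gamma(k\alpha+1)\,t^{k}/\Gamma(k+1)=\Gamma(k\alpha+1)\,t^{k}/k!$.

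Substituting this back, the $\Gamma(k\alpha+1)$ factors cancel and the series collapses:
\begin{equation*}
\int_{0}^{\infty}G^{\alpha}_{c}(u,x)\,f_{\mathcal{H}^{\alpha}(t)}(x,t)\,\mathrm{d}x=\sum_{k=0}^{\infty}\frac{t^{k}}{k!}\left(\sum_{j=0}^{\infty}(\lambda_{j-1}-\lambda_{j})u^{j}\right)^{k}=\exp\left(t\sum_{j=0}^{\infty}u^{j}(\lambda_{j-1}-\lambda_{j})\right),
\end{equation*}
which is precisely $G_{c}(u,t)$ as computed in (\ref{kjhgt21}). By uniqueness of the probability generating function, $\mathcal{N}_{c}(t)\overset{d}{=}\mathcal{N}^{\alpha}_{c}(\mathcal{H}^{\alpha}(t))$ for every $t>0$. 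The only genuinely delicate point is the Fubini/dominated-convergence justification for interchanging the infinite sum and the integral; I would handle it by bounding the absolute series by $E_{\alpha,1}(c\,x^{\alpha})$ with $c=\sum_{j\ge0}|\lambda_{j-1}-\lambda_{j}|<\infty$ (finite since $\lambda_{j}$ is summable and monotone), and noting that $\int_{0}^{\infty}E_{\alpha,1}(c\,x^{\alpha})f_{\mathcal{H}^{\alpha}(t)}(x,t)\,\mathrm{d}x=e^{ct}<\infty$ by the same term-by-term computation using (\ref{mhts}). Everything else is routine cancellation.
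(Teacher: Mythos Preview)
Your proof is correct and arrives at the same identity of generating functions as the paper, but by a genuinely different and more elementary route. The paper expands $G^{\alpha}_{c}(u,s)$ at the level of the state probabilities via (\ref{dssdew1}), replaces each three-parameter Mittag-Leffler function $E_{\alpha,k\alpha+1}^{k+1}(-\lambda_0 s^\alpha)$ by its Mellin--Barnes integral representation (\ref{m3}), swaps the order of integration so that the Mellin identity (\ref{mhts}) acts inside the contour integral, and then collapses the resulting contour integrals back to exponentials using (\ref{me}) before recognising (\ref{kjhgt21}). You bypass all of the Mellin--Barnes machinery by working directly with the power-series form (\ref{smpgf}) of $G^{\alpha}_{c}(u,x)$: a single application of (\ref{mhts}) at $\nu=k\alpha+1$ converts each term $x^{k\alpha}/\Gamma(k\alpha+1)$ into $t^{k}/k!$, and the exponential series for $G_{c}(u,t)$ falls out immediately. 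Your approach is shorter and requires only monotone/dominated convergence (your bound $E_{\alpha,1}(c\,x^{\alpha})$ with $c=\sum_{j\ge0}|\lambda_{j-1}-\lambda_j|=2\lambda_0$ and $\int E_{\alpha,1}(c\,x^{\alpha})f_{\mathcal{H}^{\alpha}(t)}(x,t)\,\mathrm{d}x=e^{ct}$ is exactly right); the paper's route, while heavier, has the virtue of illustrating how the Mellin--Barnes representation interacts term-by-term with the density of $\mathcal{H}^{\alpha}(t)$ at the level of individual state probabilities rather than only the aggregated pgf.
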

\begin{proof}
	Using (\ref{dssdew1}), we have
	\scriptsize
	\begin{align*}
	\int_{0}^{\infty}G^{\alpha}_{c}(u,s)&f_{\mathcal{H}^{\alpha}(t)}(s,t)\mathrm{d}s\\
	&=\int_{0}^{\infty}\sum_{n=0}^{\infty}u^np^{\alpha}_{c}(n,s)f_{\mathcal{H}^{\alpha}(t)}(s,t)\mathrm{d}s\\
	&=\int_{0}^{\infty}\left(E_{\alpha,1}(-\lambda_{0}s^{\alpha})+\sum_{n=1}^{\infty}u^{n}\sum_{k=1}^{n}\sum_{\Lambda_{n}^{k}}k!\prod_{j=1}^{n-k+1}\frac{(\lambda_{j-1}-\lambda_{j})^{k_{j}}}{k_{j}!} s^{k\alpha}E_{\alpha,k\alpha+1}^{k+1}(-\lambda_{0}s^{\alpha})\right)f_{\mathcal{H}^{\alpha}(t)}(s,t)\mathrm{d}s\\
	&=\int_{0}^{\infty}\Bigg(\frac{1}{2\pi i}\int_{c-i\infty}^{c+i\infty}\frac{\Gamma (z)\Gamma(1-z)}{\Gamma(1-\alpha z)}(\lambda_{0}s^{\alpha})^{-z}\mathrm{d}z\\
	&\ \ +\sum_{n=1}^{\infty}u^{n}\sum_{k=1}^{n}\sum_{\Lambda_{n}^{k}}\prod_{j=1}^{n-k+1}\frac{(\lambda_{j-1}-\lambda_{j})^{k_{j}}}{k_{j}!} 
	\frac{1}{2\pi i}\int_{c-i\infty}^{c+i\infty}\frac{\Gamma (z)\Gamma(k+1-z)}{\Gamma(\alpha(k-z)+1)}(\lambda_{0}s^{\alpha})^{-z}s^{k\alpha}\mathrm{d}z\Bigg)f_{\mathcal{H}^{\alpha}(t)}(s,t)\mathrm{d}s,\\
	&\hspace*{12cm} (\mathrm{using}\ (\ref{m3}))\\
	&=\frac{1}{2\pi i}\int_{c-i\infty}^{c+i\infty}\frac{\Gamma (z)\Gamma(1-z)\lambda_{0}^{-z}}{\Gamma(1-\alpha z)}\left(\int_{0}^{\infty}s^{-\alpha z}f_{\mathcal{H}^{\alpha}(t)}(s,t)\mathrm{d}s\right)\mathrm{d}z\\
	&\ \ +\sum_{n=1}^{\infty}u^{n}\sum_{k=1}^{n}\sum_{\Lambda_{n}^{k}}\prod_{j=1}^{n-k+1}\frac{(\lambda_{j-1}-\lambda_{j})^{k_{j}}}{k_{j}!}
	\frac{1}{2\pi i}\int_{c-i\infty}^{c+i\infty}\frac{\Gamma (z)\Gamma(k+1-z)\lambda_{0}^{-z}}{\Gamma(\alpha(k-z)+1)}\left(\int_{0}^{\infty}s^{k\alpha-\alpha z}f_{\mathcal{H}^{\alpha}(t)}(s,t)\mathrm{d}s\right)\mathrm{d}z\\
	&=\frac{1}{2\pi i}\int_{c-i\infty}^{c+i\infty}\Gamma (z)(\lambda_{0}t)^{-z}\mathrm{d}z +\sum_{n=1}^{\infty}u^{n}\sum_{k=1}^{n}\sum_{\Lambda_{n}^{k}}\prod_{j=1}^{n-k+1}\frac{(\lambda_{j-1}-\lambda_{j})^{k_{j}}}{k_{j}!}
	\frac{t^{k}}{2\pi i}\int_{c-i\infty}^{c+i\infty}\Gamma (z)(\lambda_{0}t)^{-z}\mathrm{d}z,\\
	&\hspace*{12cm} (\mathrm{using}\  (\ref{mhts}))\\
	&=e^{-t\lambda_{0}} +\sum_{n=1}^{\infty}u^{n}\sum_{k=1}^{n}\sum_{\Lambda_{n}^{k}}\prod_{j=1}^{n-k+1}\frac{(\lambda_{j-1}-\lambda_{j})^{k_{j}}}{k_{j}!} t^{k}e^{-t\lambda_{0}},\ \ (\mathrm{using}\ (\ref{me}))\\
	&
	=G_c(u,t),
	\end{align*}
	\normalsize
	where in the last step we have used (\ref{kjhgt21}).
\end{proof}
\section{The dependence structer for CFPP and its increments}\label{Section5}
In this section, we show that the CFPP has LRD property whereas its increments exhibits the SRD property. 

For a non-stationary stochastic process $\{X(t)\}_{t\geq0}$ the LRD and SRD properties are defined as follows (see D'Ovidio and Nane (2014), Maheshwari and Vellaisamy (2016)):
\begin{definition}
	Let $s>0$ be fixed and $\{X(t)\}_{t\ge0}$ be a stochastic process whose correlation function satisfies
	\begin{equation}\label{lrd}
	\operatorname{Corr}(X(s),X(t))\sim c(s)t^{-\gamma},\  \text{as}\ t\rightarrow\infty,
	\end{equation}
for some $c(s)>0$. The process $\{X(t)\}_{t\ge0}$ has the LRD property if $\gamma\in(0,1)$ and the SRD property if $\gamma\in(1,2)$.
\end{definition}

First, we obtain the covariance of CFPP. Using Theorem 2.1 of Leonenko {\it et al.} (2014) and the subordination result (\ref{keyyekk}), the covariance of CFPP can be obtained as follows:
\begin{align}\label{covfrd11}
\operatorname{Cov}\left(\mathcal{N}^{\alpha}_{c}(s),\mathcal{N}^{\alpha}_{c}(t)\right)&=\operatorname{Cov}\left(\mathcal{N}_{c}(H^{\alpha}(s)),\mathcal{N}_{c}(H^{\alpha}(t))\right)\nonumber\\
&=\operatorname{Var}\left(\mathcal{N}_{c}(1)\right)\mathbb{E}(H^{\alpha}(\min\{s,t\}))+\left(\mathbb{E}(\mathcal{N}_{c}(1))\right)^{2}\operatorname{Cov}\left(H^{\alpha}(s),H^{\alpha}(t)\right),
\end{align}
where we used Theorem \ref{thwe22} and the fact that the inverse stable subordinator $\{H^{\alpha}(t)\}_{t\ge0}$ is a non-decreasing process. 

On using Theorem 2.1 of Leonenko {\it et al.} (2014), the mean and variance of CFPP can alternatively be obtained as follows:
\begin{equation*}
\mathbb{E}\left(\mathcal{N}^{\alpha}_{c}(t)\right)=\mathbb{E}\left(\mathcal{N}_{c}(1)\right)\mathbb{E}\left(H^{\alpha}(t)\right)
\end{equation*}
and
\begin{equation*}
\operatorname{ Var}\left(\mathcal{N}^{\alpha}_{c}(t)\right)=\left(\mathbb{E}\left(\mathcal{N}_{c}(1)\right)\right)^{2}\operatorname{ Var}\left(H^{\alpha}(t)\right)+\operatorname{ Var}\left(\mathcal{N}_{c}(1)\right)\mathbb{E}\left(H^{\alpha}(t)\right),
\end{equation*}
where the mean and variance of inverse $\alpha$-stable subordinator are given by (see Leonenko {\it et al.} (2014), Eq. (8) and Eq. (11))
	\begin{equation*}
	\mathbb{E}\left(H^{\alpha}(t)\right)=\frac{t^{\alpha}}{\Gamma(\alpha+1)}
	\end{equation*}
	and 	
	\begin{equation}\label{xswe33}
	\operatorname{ Var}\left(H^{\alpha}(t)\right)=t^{2\alpha}\left(\frac{2}{\Gamma(2\alpha+1)}-\frac{1}{\Gamma^{2}(\alpha+1)}\right),
	\end{equation}
	respectively.
	\begin{remark}
		From (\ref{wswee11}), (\ref{var}) and (\ref{xswe33}), we get
		\begin{equation*}
		\operatorname{ Var}\left(\mathcal{N}^{\alpha}_{c}(t)\right)-\mathbb{E}\left(\mathcal{N}^{\alpha}_{c}(t)\right)=\frac{2t^{\alpha}\sum_{j=1}^{\infty}j\lambda_{j}}{\Gamma(\alpha+1)}+\left(\sum_{j=0}^{\infty}\lambda_{j}\right)^{2}	\operatorname{ Var}\left(H^{\alpha}(t)\right).
		\end{equation*}
		Thus, the CFPP exhibits overdispersion as $ \operatorname{ Var}\left(\mathcal{N}^{\alpha}_{c}(t)\right)-\mathbb{E}\left(\mathcal{N}^{\alpha}_{c}(t)\right)>0$ for $t>0$.
	\end{remark}
Let
\begin{equation*}
R=\frac{1}{\Gamma(\alpha+1)}\sum_{j=0}^{\infty}\lambda_{j},\ \ S=\left(\frac{2}{\Gamma(2\alpha+1)}-\frac{1}{\Gamma^2(\alpha+1)}\right)\left(\sum_{j=0}^{\infty}\lambda_{j}\right)^{2}
\end{equation*}
and
\begin{equation*}
T=\frac{1}{\Gamma(\alpha+1)}\left(\sum_{j=0}^{\infty}\lambda_{j}+2\sum_{j=1}^{\infty}j\lambda_{j}\right).
\end{equation*}
For $0<s\le t$ in (\ref{covfrd11}), we get
\begin{equation}\label{qazxsa22}
\operatorname{Cov}\left(\mathcal{N}^{\alpha}_{c}(s),\mathcal{N}^{\alpha}_{c}(t)\right)=Ts^{\alpha}+\left(\sum_{j=0}^{\infty}\lambda_{j}\right)^{2}\operatorname{Cov}\left(H^{\alpha}(s),H^{\alpha}(t)\right).
\end{equation}
For large $t$, we use the following result due to Leonenko {\it et al.} (2014):
\begin{equation*}\label{asi}
\operatorname{Cov}\left(H^{\alpha}(s),H^{\alpha}(t)\right)\sim\frac{s^{2\alpha}}{\Gamma(2\alpha+1)}.
\end{equation*}
in (\ref{qazxsa22}), to obtain
\begin{equation}\label{wdseq16}
\operatorname{Cov}\left(\mathcal{N}^{\alpha}_{c}(s),\mathcal{N}^{\alpha}_{c}(t)\right)\sim Ts^{\alpha}+\frac{\left(\sum_{j=0}^{\infty}\lambda_{j}\right)^{2}s^{2\alpha}}{\Gamma(2\alpha+1)}\ \ \mathrm{as}\ \ t\to\infty.
\end{equation}
We now show that the CFPP has LRD property.
 	\begin{theorem}
		The CFPP exhibits the LRD property.
	\end{theorem}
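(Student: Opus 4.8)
The plan is to establish the long-range dependence property directly from the definition, by computing the asymptotic behaviour of $\operatorname{Corr}(\mathcal{N}^{\alpha}_{c}(s),\mathcal{N}^{\alpha}_{c}(t))$ as $t\to\infty$ with $s>0$ fixed, and verifying that it decays like $t^{-\gamma}$ for some $\gamma\in(0,1)$. The numerator is already handled: by \eqref{wdseq16} we have
\begin{equation*}
\operatorname{Cov}\left(\mathcal{N}^{\alpha}_{c}(s),\mathcal{N}^{\alpha}_{c}(t)\right)\sim Ts^{\alpha}+\frac{\left(\sum_{j=0}^{\infty}\lambda_{j}\right)^{2}s^{2\alpha}}{\Gamma(2\alpha+1)}\ \ \mathrm{as}\ \ t\to\infty,
\end{equation*}
which is a positive constant depending only on $s$. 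So the only remaining work is to control the denominator $\sqrt{\operatorname{Var}(\mathcal{N}^{\alpha}_{c}(s))}\sqrt{\operatorname{Var}(\mathcal{N}^{\alpha}_{c}(t))}$ as $t\to\infty$.

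First I would recall from \eqref{var} that
\begin{equation*}
\operatorname{Var}\left(\mathcal{N}^{\alpha}_{c}(t)\right)=Rt^{\alpha}+2\Big(\tfrac{1}{\Gamma(\alpha+1)}\textstyle\sum_{j=1}^{\infty}j\lambda_{j}\Big)t^{\alpha}+St^{2\alpha},
\end{equation*}
with $R,S$ as defined just before \eqref{qazxsa22}; the dominant term as $t\to\infty$ is $St^{2\alpha}$ (note $S>0$ since $2/\Gamma(2\alpha+1)>1/\Gamma^2(\alpha+1)$ for $0<\alpha\le1$, which is exactly the positivity of $\operatorname{Var}(H^\alpha)$ in \eqref{xswe33}). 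Hence $\operatorname{Var}(\mathcal{N}^{\alpha}_{c}(t))\sim St^{2\alpha}$ and $\sqrt{\operatorname{Var}(\mathcal{N}^{\alpha}_{c}(t))}\sim \sqrt{S}\,t^{\alpha}$ as $t\to\infty$. Dividing, one gets
\begin{equation*}
\operatorname{Corr}\left(\mathcal{N}^{\alpha}_{c}(s),\mathcal{N}^{\alpha}_{c}(t)\right)\sim \frac{Ts^{\alpha}+\left(\sum_{j=0}^{\infty}\lambda_{j}\right)^{2}s^{2\alpha}/\Gamma(2\alpha+1)}{\sqrt{S\,\operatorname{Var}(\mathcal{N}^{\alpha}_{c}(s))}}\; t^{-\alpha}\ \ \mathrm{as}\ \ t\to\infty,
\end{equation*}
so $\gamma=\alpha$, and since $0<\alpha\le1$ we have $\gamma\in(0,1]$; for $\gamma\in(0,1)$ the LRD property holds by definition. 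I would note explicitly that $c(s)$, the constant in \eqref{lrd}, is positive because both $T$ and $\sum_j\lambda_j$ are positive.

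There is no serious obstacle here — the argument is a routine combination of \eqref{wdseq16}, \eqref{var}, and the elementary fact that the leading power of $t$ in the variance is $2\alpha$. The only point requiring a word of care is the boundary case $\alpha=1$: then $\gamma=1$, which lies outside the open interval $(0,1)$, so the LRD property in the stated sense fails and one recovers the L\'evy (CPP) case where the increments are independent; I would simply restrict attention to $0<\alpha<1$ in the statement of the theorem, or remark that for $\alpha=1$ the process is the CPP which is a L\'evy process by Theorem \ref{thwe22}. The rest is bookkeeping: substitute, simplify, identify $\gamma=\alpha\in(0,1)$, and conclude.
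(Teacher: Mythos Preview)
Your proposal is correct and follows essentially the same route as the paper: combine the covariance asymptotic \eqref{wdseq16} with the variance formula \eqref{var}, observe that $\operatorname{Var}(\mathcal{N}^{\alpha}_{c}(t))\sim St^{2\alpha}$, and read off $\operatorname{Corr}\sim c_0(s)t^{-\alpha}$ with $\alpha\in(0,1)$. Your explicit remark on the boundary case $\alpha=1$ (where $S=0$ and the process reduces to the CPP) is a nice addition; the paper simply writes ``As $0<\alpha<1$, the result follows'' without further comment.
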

	\begin{proof}
		Using (\ref{var}) and (\ref{wdseq16}), we get the following for fixed $s>0$ and large $t$:
		\begin{align*}
		\operatorname{Corr}\left(\mathcal{N}^{\alpha}_{c}(s),\mathcal{N}^{\alpha}_{c}(t)\right)&=\frac{\operatorname{Cov}\left(\mathcal{N}^{\alpha}_{c}(s),\mathcal{N}^{\alpha}_{c}(t)\right)}{\sqrt{\operatorname{ Var}\left(\mathcal{N}^{\alpha}_{c}(s)\right)}\sqrt{\operatorname{ Var}\left(\mathcal{N}^{\alpha}_{c}(t)\right)}}\\
		&\sim\frac{\Gamma(2\alpha+1)Ts^{\alpha}+\left(\sum_{j=0}^{\infty}\lambda_{j}\right)^{2}s^{2\alpha}}{\Gamma(2\alpha+1)\sqrt{\operatorname{ Var}\left(\mathcal{N}^{\alpha}_{c}(s)\right)}\sqrt{St^{2\alpha}+Tt^{\alpha}}}\\
		&\sim c_0(s)t^{-\alpha},
		\end{align*}
where
\begin{equation*}
c_0(s)=\frac{\Gamma(2\alpha+1)Ts^{\alpha}+\left(\sum_{j=0}^{\infty}\lambda_{j}\right)^{2}s^{2\alpha}}{\Gamma(2\alpha+1)\sqrt{\operatorname{ Var}\left(\mathcal{N}^{\alpha}_{c}(s)\right)}\sqrt{S}}.
\end{equation*}
As $0<\alpha<1$, the result follows.
\end{proof}
For a fixed $\delta>0$, we define the convoluted fractional Poissonian noise (CFPN), denoted by $\{Z^{\alpha}_{c,\delta}(t)\}_{t\ge0}$, as the increment process of CFPP, that is, 
	\begin{equation}\label{qlq1}
	Z^{\alpha}_{c,\delta}(t)\coloneqq\mathcal{N}^{\alpha}_{c}(t+\delta)-\mathcal{N}^{\alpha}_{c}(t).
	\end{equation}
Next, we show that the CFPN exhibits the SRD property.
	\begin{theorem}\label{varbgfff}
		The CFPN has the SRD property.
	\end{theorem}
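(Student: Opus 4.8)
The plan is to reduce the correlation structure of the noise $Z^{\alpha}_{c,\delta}$ to that of the increments of the inverse $\alpha$-stable subordinator, in the same spirit as the covariance of the CFPP was handled in (\ref{covfrd11})--(\ref{qazxsa22}). Starting from the subordination (\ref{keyyekk}), $\mathcal{N}^{\alpha}_{c}(t)\overset{d}{=}\mathcal{N}_{c}(H^{\alpha}(t))$, where the CPP $\{\mathcal{N}_{c}(t)\}_{t\ge0}$ is a L\'evy process (Theorem \ref{thwe22}) independent of $\{H^{\alpha}(t)\}_{t\ge0}$, I would fix $s>0$ and take $t$ so large that $t>s+\delta$; then the intervals $[H^{\alpha}(s),H^{\alpha}(s+\delta)]$ and $[H^{\alpha}(t),H^{\alpha}(t+\delta)]$ are disjoint since $H^{\alpha}$ is non-decreasing. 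Conditioning on the path of $H^{\alpha}$, using the independence of the increments of $\mathcal{N}_{c}$ together with the conditional mean and variance of a L\'evy increment (with $\mathbb{E}(\mathcal{N}_{c}(1))=\sum_{j}\lambda_{j}$ from (\ref{4.3ew}) and $\operatorname{Var}(\mathcal{N}_{c}(1))=\sum_{j}\lambda_{j}+2\sum_{j}j\lambda_{j}$ from (\ref{4.4er})), the laws of total covariance and total variance give
\[
\operatorname{Cov}\!\left(Z^{\alpha}_{c,\delta}(s),Z^{\alpha}_{c,\delta}(t)\right)=\left(\sum_{j=0}^{\infty}\lambda_{j}\right)^{2}\operatorname{Cov}\!\left(H^{\alpha}(s+\delta)-H^{\alpha}(s),\,H^{\alpha}(t+\delta)-H^{\alpha}(t)\right),
\]
\[
\operatorname{Var}\!\left(Z^{\alpha}_{c,\delta}(t)\right)=T\big[(t+\delta)^{\alpha}-t^{\alpha}\big]+\left(\sum_{j=0}^{\infty}\lambda_{j}\right)^{2}\operatorname{Var}\!\left(H^{\alpha}(t+\delta)-H^{\alpha}(t)\right),
\]
with $T$ as defined above. (These identities can equivalently be read off from (\ref{covfrd11}), (\ref{qazxsa22}) and (\ref{var}) by bilinearity; the essential point is that the $Tu^{\alpha}$ terms cancel in the second difference producing the covariance of the increments.)

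Next I would bring in the second-order asymptotics of the increments of $H^{\alpha}$. Using the exact covariance function of the inverse stable subordinator, its self-similarity $\{H^{\alpha}(ct)\}_{t\ge0}\overset{d}{=}\{c^{\alpha}H^{\alpha}(t)\}_{t\ge0}$, and the estimate of Leonenko {\it et al.} (2014) already used above, one finds, for fixed $s,\delta>0$ as $t\to\infty$,
\[
\operatorname{Cov}\!\left(H^{\alpha}(s+\delta)-H^{\alpha}(s),\,H^{\alpha}(t+\delta)-H^{\alpha}(t)\right)\sim a(s,\delta)\,t^{\alpha-2},\qquad \operatorname{Var}\!\left(H^{\alpha}(t+\delta)-H^{\alpha}(t)\right)\sim b(\delta)\,t^{2\alpha-1},
\]
for constants $b(\delta)>0$ and $a(s,\delta)\ne0$ (in the double difference the degree-$2\alpha$ homogeneous leading pieces $s^{2\alpha}/\Gamma(2\alpha+1)$ cancel, leaving the next-order term). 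Since $(t+\delta)^{\alpha}-t^{\alpha}\sim\alpha\delta\,t^{\alpha-1}$ and $2\alpha-1>\alpha-1$, the subordinator-variance term dominates, so $\operatorname{Var}(Z^{\alpha}_{c,\delta}(t))\sim(\sum_{j}\lambda_{j})^{2}b(\delta)\,t^{2\alpha-1}$. As $\operatorname{Var}(Z^{\alpha}_{c,\delta}(s))$ is a positive constant for fixed $s,\delta$, combining the above yields
\[
\operatorname{Corr}\!\left(Z^{\alpha}_{c,\delta}(s),Z^{\alpha}_{c,\delta}(t)\right)=\frac{\operatorname{Cov}\!\left(Z^{\alpha}_{c,\delta}(s),Z^{\alpha}_{c,\delta}(t)\right)}{\sqrt{\operatorname{Var}\!\left(Z^{\alpha}_{c,\delta}(s)\right)}\,\sqrt{\operatorname{Var}\!\left(Z^{\alpha}_{c,\delta}(t)\right)}}\sim c(s)\,t^{\,\alpha-2-\frac{2\alpha-1}{2}}=c(s)\,t^{-3/2}
\]
as $t\to\infty$, for some $c(s)>0$. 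Hence (\ref{lrd}) holds with $\gamma=3/2\in(1,2)$, which is precisely the SRD property of the CFPN.

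The main obstacle is the second step: establishing the exact rates $t^{\alpha-2}$ and $t^{2\alpha-1}$, with non-vanishing leading constants, for the covariance and the variance of the increments of $H^{\alpha}$. This requires the explicit formula for $\operatorname{Cov}(H^{\alpha}(u),H^{\alpha}(v))$, $u\le v$, and a careful expansion of it past the leading constant: one must track the cancellation of the homogeneous leading terms in the double difference, verify that the surviving covariance term is genuinely of order $t^{\alpha-2}$ (and not smaller), and check that the $t^{2\alpha-1}$ contribution to $\operatorname{Var}(Z^{\alpha}_{c,\delta}(t))$ really dominates the $t^{\alpha-1}$ one coming from $(t+\delta)^{\alpha}-t^{\alpha}$. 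Once these asymptotics are secured, the subordination reduction and the final ratio are routine.
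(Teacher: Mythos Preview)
Your reduction via (\ref{keyyekk}) and the law of total covariance/variance is correct and is essentially what the paper does (by bilinearity of (\ref{qazxsa22})); the covariance rate $t^{\alpha-2}$ is also right. The genuine error is in the variance of the subordinator increment. You assert
\[
\operatorname{Var}\!\left(H^{\alpha}(t+\delta)-H^{\alpha}(t)\right)\sim b(\delta)\,t^{2\alpha-1},
\]
apparently by combining self-similarity with an implicit ``linear-in-length'' behaviour of the increment variance. But $\{H^{\alpha}(t)\}$ does \emph{not} have stationary increments, and the small-increment variance near a fixed time does not scale linearly. Using the exact formula (\ref{mhjgf44}) and $B(\alpha,\alpha+1)-B(\alpha,\alpha+1;u)=\int_u^1 x^{\alpha-1}(1-x)^{\alpha}\,\mathrm{d}x\sim (1-u)^{\alpha+1}/(\alpha+1)$ as $u\uparrow 1$, one finds for $0<\alpha<1$
\[
\operatorname{Var}\!\left(H^{\alpha}(t+\delta)-H^{\alpha}(t)\right)\sim \frac{2\alpha\,\delta^{\alpha+1}}{(\alpha+1)\Gamma^{2}(\alpha+1)}\,t^{\alpha-1},
\]
i.e.\ order $t^{\alpha-1}$, not $t^{2\alpha-1}$. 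Hence the two terms in your expression for $\operatorname{Var}(Z^{\alpha}_{c,\delta}(t))$ are of the \emph{same} order $t^{\alpha-1}$; the subordinator piece does not dominate the $T\big((t+\delta)^{\alpha}-t^{\alpha}\big)\sim T\alpha\delta\,t^{\alpha-1}$ piece as you claim.

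With the correct variance rate the final exponent becomes
\[
\operatorname{Corr}\!\left(Z^{\alpha}_{c,\delta}(s),Z^{\alpha}_{c,\delta}(t)\right)\sim c(s)\,t^{\alpha-2-\frac{\alpha-1}{2}}=c(s)\,t^{-(3-\alpha)/2},
\]
which is exactly the paper's conclusion (and still gives SRD since $(3-\alpha)/2\in(1,3/2)$). So the overall architecture of your argument is fine, but the asserted $t^{2\alpha-1}$ and the resulting $\alpha$-independent exponent $3/2$ are wrong; fix the increment-variance asymptotics and the proof goes through.
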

	\begin{proof}
		Let $s\ge0$ be fixed such that $0\le s+\delta\le t$. We have,
		\begin{align}\label{covz}
		\operatorname{Cov}(Z^{\alpha}_{c,\delta}(s),Z^{\alpha}_{c,\delta}(t))&=\operatorname{Cov}\left(\mathcal{N}^{\alpha}_{c}(s+\delta)-\mathcal{N}^{\alpha}_{c}(s),\mathcal{N}^{\alpha}_{c}(t+\delta)-\mathcal{N}^{\alpha}_{c}(t)\right)\nonumber\\
		&=\operatorname{Cov}\left(\mathcal{N}^{\alpha}_{c}(s+\delta),\mathcal{N}^{\alpha}_{c}(t+\delta)\right)+\operatorname{Cov}\left(\mathcal{N}^{\alpha}_{c}(s),\mathcal{N}^{\alpha}_{c}(t)\right)\nonumber\\
		&\ \ \ \  -\operatorname{Cov}\left(\mathcal{N}^{\alpha}_{c}(s+\delta),\mathcal{N}^{\alpha}_{c}(t)\right)-\operatorname{Cov}\left(\mathcal{N}^{\alpha}_{c}(s),\mathcal{N}^{\alpha}_{c}(t+\delta)\right).
		\end{align}
		Leonenko {\it et al.} (2014) obtained the following expression for the covariance of inverse $\alpha$-stable subordinators:
		\begin{equation}\label{mhjgf44}
		\operatorname{Cov}\left(H^{\alpha}(s),H^{\alpha}(t)\right)=\frac{1}{\Gamma^2(\alpha+1)}\left( \alpha s^{2\alpha}B(\alpha,\alpha+1)+F(\alpha;s,t)\right),
		\end{equation}
		where $F(\alpha;s,t)=\alpha t^{2\alpha}B(\alpha,\alpha+1;s/t)-(ts)^{\alpha}$. Here, $B(\alpha,\alpha+1)$ is the beta function whereas $B(\alpha,\alpha+1;s/t)$ is the incomplete beta function. 
		
		In (\ref{qazxsa22}), we use the following asymptotic result (see Maheshwari and Vellaisamy (2016), Eq. (8)):
		\begin{equation*}
		F(\alpha;s,t)\sim \frac{-\alpha^{2}}{(\alpha+1)}\frac{s^{\alpha+1}}{t^{1-\alpha}},\ \ \mathrm{as}\ \ t\to\infty,
		\end{equation*}
		to obtain 
		\begin{equation}\label{covzt}
		\operatorname{Cov}\left(\mathcal{N}^{\alpha}_{c}(s),\mathcal{N}^{\alpha}_{c}(t)\right)\sim Ts^{\alpha}+ R^{2}\left(\alpha s^{2\alpha}B(\alpha,\alpha+1)-\frac{\alpha^{2}}{(\alpha+1)}\frac{s^{\alpha+1}}{t^{1-\alpha}}\right) \ \ \mathrm{as}\  t\to\infty.
		\end{equation}
		From (\ref{covz}) and (\ref{covzt}), we get the following for large $t$:
		\begin{align}\label{covzi}
		\operatorname{Cov}(Z^{\alpha}_{c,\delta}(s),Z^{\alpha}_{c,\delta}(t))&\sim \frac{R^{2}\alpha^{2}}{\alpha+1}\left(\frac{s^{\alpha+1}}{(t+\delta)^{1-\alpha}}+\frac{(s+\delta)^{\alpha+1}}{t^{1-\alpha}}-\frac{s^{\alpha+1}}{t^{1-\alpha}}-\frac{(s+\delta)^{\alpha+1}}{(t+\delta)^{1-\alpha}}\right)\nonumber\\
		&=\frac{R^{2}\alpha^{2}}{\alpha+1}\left((t+\delta)^{\alpha-1}-t^{\alpha-1}\right)\left(s^{\alpha+1}-(s+\delta)^{\alpha+1}\right)\nonumber\\
		&\sim\frac{\alpha^{2}\delta(1-\alpha)}{\alpha+1}\left((s+\delta)^{\alpha+1}-s^{\alpha+1}\right)R^{2}t^{\alpha-2}.
		\end{align}
		Now,
		\begin{equation}\label{rfcdee1}
		\operatorname{Var}(Z^{\alpha}_{c,\delta}(t))=\operatorname{Var}(\mathcal{N}^{\alpha}_{c}(t+\delta))+\operatorname{Var}(\mathcal{N}^{\alpha}_{c}(t))-2\operatorname{Cov}\left(\mathcal{N}^{\alpha}_{c}(t),\mathcal{N}^{\alpha}_{c}(t+\delta)\right).
		\end{equation}
		From (\ref{qazxsa22}) and (\ref{mhjgf44}), we have
		\begin{equation}\label{sxeww12}
		\operatorname{Cov}\left(\mathcal{N}^{\alpha}_{c}(t),\mathcal{N}^{\alpha}_{c}(t+\delta)\right)=Tt^{\alpha}+R^{2}\left(\alpha t^{2\alpha}B(\alpha,\alpha+1)+F(\alpha;t,t+\delta)\right),
		\end{equation}
		where $F(\alpha;t,t+\delta)=\alpha (t+\delta)^{2\alpha}B(\alpha,\alpha+1,t/t+\delta)-(t(t+\delta))^{\alpha}$.	
		
		For large $t$, we have
		\begin{equation*}
	B(\alpha,\alpha+1,t/t+\delta)\sim B(\alpha,\alpha+1)=\frac{\Gamma(\alpha)\Gamma(\alpha+1)}{\Gamma(2\alpha+1)}.
		\end{equation*}
		Substituting (\ref{var}) and (\ref{sxeww12}) in (\ref{rfcdee1}), we get
		\begin{align}\label{varzi}
		\operatorname{Var}(Z^{\alpha}_{c,\delta}(t))&\sim (S-2R^{2}\alpha B(\alpha,\alpha+1))t^{2\alpha}+(S-2R^{2}\alpha B(\alpha,\alpha+1))(t+\delta)^{2\alpha}\nonumber\\
		&\hspace*{4cm}+T\left((t+\delta)^{\alpha}-t^{\alpha}\right)+2R^{2}(t(t+\delta))^{\alpha}\nonumber\\
		&= Tt^{\alpha}\left(\left(1+\frac{\delta}{t}\right)^{\alpha}-1\right)-R^{2}t^{2\alpha}\left(\left(1+\frac{\delta}{t}\right)^{\alpha}-1\right)^2\nonumber\\
		&\sim T\alpha\delta t^{\alpha-1}-R^{2}\alpha^{2}\delta^{2}t^{2\alpha-2}\nonumber\\
		&\sim \alpha\delta Tt^{\alpha-1},\ \ \mathrm{as}\ \ t\to\infty.
		\end{align}
		From (\ref{covzi}) and (\ref{varzi}), we have
		\begin{align*}
		\operatorname{Corr}(Z^{\alpha}_{c,\delta}(s),Z^{\alpha}_{c,\delta}(t))&=\dfrac{\operatorname{Cov}\left(Z^{\alpha}_{c,\delta}(s),Z^{\alpha}_{c,\delta}(t)\right)}{\sqrt{\operatorname{Var}(Z^{\alpha}_{c,\delta}(s))}\sqrt{\operatorname{Var}(Z^{\alpha}_{c,\delta}(t))}}\\
		&\sim \frac{\alpha^{2}\delta(1-\alpha)\left((s+\delta)^{\alpha+1}-s^{\alpha+1}\right)R^{2}t^{\alpha-2}}{(\alpha+1)\sqrt{\operatorname{Var}(Z^{\alpha}_{c,\delta}(s))}\sqrt{\alpha\delta T t^{\alpha-1}}}\\
		&=c_1(s)t^{-(3-\alpha)/2},\ \ \mathrm{as}\ t\rightarrow\infty.
		\end{align*}
	where
	\begin{equation*}
	c_1(s)=\frac{\alpha^{2}\delta(1-\alpha)\left((s+\delta)^{\alpha+1}-s^{\alpha+1}\right)R^{2}}{(\alpha+1)\sqrt{\operatorname{Var}(Z^{\alpha}_{c,\delta}(s))}\sqrt{\alpha\delta T}}.
	\end{equation*}
		Thus, the CFPN exhibits the SRD property as $1<(3-\alpha)/2<3/2$.
	\end{proof}

\printbibliography
\end{document}